\newtheorem{thm}{Theorem}
\newtheorem{theorem}{Theorem}
\newtheorem{proposition}{Proposition}
\newtheorem{lemma}[proposition]{Lemma}
\newtheorem{corollary}[proposition]{Corollary}
\theoremstyle{definition}
\newtheorem{remark}[proposition]{Remark}
\newcommand{\mtwo}[4]{\begin{bmatrix}#1&#2\\#3&#4\end{bmatrix}}
\newcommand{\fr}[2]{^#1\!\!/\!_#2}
\newcommand{\op}[1]{\operatorname{#1}}
\newcommand{\ce}[1]{\widetilde{#1}}
\newcommand{\tp}[1]{{^\intercal\:\!\!#1}}
\newcommand{\tpinv}[1]{{^\intercal\:\!\!#1^{-1}}}
\newcommand{\GL}{\operatorname{GL}}
\newcommand{\SL}{\operatorname{SL}}
\newcommand{\Sp}{\operatorname{Sp}}
\newcommand{\val}{\operatorname{val}}
\newcommand{\vol}{\operatorname{vol}}
\newcommand{\aff}{\operatorname{aff}}
\newcommand{\ch}{\mathcal{X}}
\newcommand{\Z}{\mathbf{Z}}
\newcommand{\C}{\mathbf{C}}
\newcommand{\R}{\mathbf{R}}
\newcommand{\kk}{\mathbf{k}}
\newcommand{\h}{\mathfrak{h}}
\newcommand{\g}{\mathfrak{g}}
\newcommand{\oo}{\mathfrak{o}}
\newcommand{\XX}{\mathfrak{X}}
\newcommand{\spp}{\mathfrak{sp}}
\newcommand{\xx}{\widetilde{x}}
\newcommand{\hh}{\widetilde{h}}
\newcommand{\ww}{\widetilde{w}}
\newcommand{\KK}{\widetilde{K}}
\newcommand{\HH}{\widetilde{H}}
\newcommand{\II}{\widetilde{I}}
\newcommand{\LL}{\mathcal{L}}
\title
[On the metaplectic group in even residual characteristic]
{On the metaplectic group in even residual characteristic}
\author[G.\ Savin]{Gordan Savin}
\author[A.\ Wood]{Aaron Wood}
\begin{document}
\maketitle

\begin{abstract}
For maximal compact subgroups of the metaplectic group, the minimal types in the Schr\"odinger model of the Weil representation are calculated explicitly. Although these types are known in the case of odd residual characteristic, this computation is done for arbitrary residual characteristic.
\end{abstract}


\section*{Introduction}
\label{introduction}

Let $\kk$ be a nonarchimedian local field, $\oo$ its ring of integers, and $\varpi$ a uniformizer of $\oo$;  the characteristic of $\kk$ is assumed to not be 2; there is no restriction on the residual characteristic, i.e., $q = |\oo / \varpi \oo|$ may be any prime power.  Fix a nontrivial, smooth, additive character $\psi$ of $\kk$ of conductor $2e$, where $e$ is the valuation of 2 in $\kk$.  Let $W$ be a symplectic vector space over $\kk$ and $\omega = \omega_\psi$ the Weil representation of the metaplectic group $\ce \Sp(W)$.

The restriction of $\omega$ to the maximal compact subgroups of $\ce\Sp(W)$ is well understood if $\kk$ has odd residual characteristic, for instance, in \cite{prasad}.  The goal of this paper is to extend this understanding to the case of even residual characteristic.

Fix a symplectic basis 
	$\{ 
		e_1, \dots, e_n, f_1, \dots, f_n 
	\}$ 
of $W$; let $Y$ be the subspace spanned by the $f_i$.  The standard lattice $\LL_0 \subset W$ is the $\oo$-span of the symplectic basis.  Define the sequence of lattices
	$\LL_0 \supset \LL_1 \supset \dots \supset \LL_n$,
where $\LL_i$ is the $\oo$-span of 
	$\{ 
		e_1, \dots, e_n, 
		\varpi f_1, \dots, \varpi f_i, f_{i+1}, \dots, f_n 
	\}$.  
Let $\LL^\ast_i$ be the dual lattice, i.e., the $\oo$-span of 
	$\{
		\varpi^{-1} e_1, \dots, \varpi^{-1} e_i, e_{i+1}, \dots, e_n, 
		f_1, \dots, f_n
	\}$.

Let $K_i$ be the subgroup of $\Sp(W)$ which stabilizes $\LL_i$ (equivalently $\LL^\ast_i$).  The groups $K_0$, \dots, $K_n$ are the only maximal compact subgroups which contain the Iwahori subgroup $I = \cap K_i$, and every maximal compact subgroup of $\Sp(W)$ is conjugate to one of these $K_i$.  For each $i$, consider the lattices 
	$L_i 
	=	\LL_i \cap Y$ 
and 
	$L'_i 
	=	\LL^\ast_i \cap Y$.  
Note that $L'_i = L_0$ for all $i$.

Let $S(Y)$ be the space of Schwartz functions on $Y$.  The lattices $L_i$ and $L'_i$ induce a filtration on $S(Y)$ as follows.  For an integer $m$, let $S_{i,m}$ be the space of Schwartz functions which are supported on $\varpi^{-m} L'_i$ and invariant under translation by $2 \varpi^m L_i$, and let $S'_{i,m}$ be the space of those which are supported on $\varpi^{-m} L_i$ and invariant under translation by $2 \varpi^m L'_i$.  Each of these may be identified with the Schwartz functions on a quotient space:
\[	
	S_{i,m} 
	=	S( \varpi^{-m} L'_i / 2 \varpi^m L_i )
	\quad \text{ and } \quad
	S'_{i,m} 
	=	S( \varpi^{-m} L_i / 2 \varpi^m L'_i ).
\] 
For $U \subset S(Y)$, denote by $U^+$ (resp. $U^-$) the even (resp. odd) functions in $U$.

Let $\KK_i$ denote the full inverse image of $K_i$ in $\ce \Sp(W)$.  The main theorem of this paper describes the minimal types of $\KK_i$ in the Schr\"odinger model of the Weil representation $\omega$ on $S(Y)$.

\begin{thm}
\label{main_theorem}
The maximal compact subgroup $\KK_i$ preserves the filtration
\[
	S_{i,0} \subset 
	S'_{i,1} \subset
	S_{i,1} \subset 
	S'_{i,2} \subset
	S_{i,2} \subset \dots
\]
and acts irreducibly on the components $S_{i,0}^+$ and $S_{i,0}^-$ of the minimal type.
\end{thm}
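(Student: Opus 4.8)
The plan is to separate the statement into two independent assertions---preservation of the filtration and irreducibility of $S_{i,0}^{+}$ and $S_{i,0}^{-}$---and to reduce each to a finite computation. For the first, I would fix a set of generators of $K_i$ adapted to the Schr\"odinger model. Writing $W = X\oplus Y$ with $X = \operatorname{span}(e_1,\dots,e_n)$, the group $\Sp(W)$ is generated by the Levi elements $m(a)\colon\phi(y)\mapsto|\det a|^{1/2}\phi(\tp{a}\,y)$ with $a\in\GL(Y)$, the Siegel unipotents $n(b)\colon\phi(y)\mapsto\psi(\tfrac12\langle by,y\rangle)\,\phi(y)$ with $b$ symmetric, and the Weyl element $w=\mtwo{0}{1}{-1}{0}$, which acts by the $\psi$-Fourier transform. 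Intersecting each family with $K_i$ by means of the explicit lattices $\LL_i$ and $\LL^{\ast}_i$ yields a generating set of $K_i$, hence of $\KK_i$ once preimages in $\ce\Sp(W)$ are chosen.

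For the generators $m(a)$ and $n(b)$ lying in $K_i$ the invariance is a direct check: such $m(a)$ preserve both $L_i$ and $L'_i$, hence every support lattice $\varpi^{-m}L'_i,\varpi^{-m}L_i$ and every invariance lattice $2\varpi^{m}L_i,2\varpi^{m}L'_i$ at once, while for integral $b$ the multiplier $\psi(\tfrac12\langle by,y\rangle)$ is constant on cosets of the relevant invariance lattice precisely because $\psi$ has conductor $2e$---the reason for that normalization. The delicate generator is $w$: the Fourier transform trades a support condition (which, by Plancherel, is a modulation condition on the $X$-side) for a translation-invariance condition on the $Y$-side, so I must show that the partial Fourier transforms actually contained in $K_i$ move each term of the filtration to its neighbour, interchanging the $S$-type and $S'$-type spaces. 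This is exactly what the duality between $\LL_i$ and $\LL^{\ast}_i$---reflected in $L'_i = L_0 = \LL^{\ast}_i\cap Y$---together with the factor $2$ inserted into every invariance lattice, is arranged to guarantee. I expect this Fourier step to be the main obstacle: in even residual characteristic the relevant duality is twisted by the conductor $2e$, so $2$ cannot be treated as a unit and its valuation $e$ must be tracked throughout, and the distinction between $L_i$ and $2L_i$---invisible when $q$ is odd---is genuine here.

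For the second assertion I would realize $S_{i,0}$ as the space of a finite Heisenberg representation and identify the $\KK_i$-action with the associated Weil representation. Reducing the symplectic form of $W$ modulo $2\LL_i\subset\LL^{\ast}_i$ turns the finite abelian group $\LL^{\ast}_i/2\LL_i$ into a nondegenerate symplectic module over the truncated ring $\oo/2\varpi\oo$, with Lagrangian $L_0/2L_i$; its Schr\"odinger model is exactly $S_{i,0}=S(L_0/2L_i)$, and the corresponding finite Heisenberg group $H$ acts irreducibly there by the finite Stone--von Neumann theorem. Since $\KK_i$ stabilizes $\LL_i$ it normalizes $H$ and fixes its central character, so once the first part guarantees that $S_{i,0}$ is preserved the action factors through the finite metaplectic group and is the Weil representation of $H$. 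The central element $-1\in\KK_i$, acting by $\phi(y)\mapsto\phi(-y)$, is scalar on every irreducible constituent, so $S_{i,0}^{+}$ and $S_{i,0}^{-}$ are unions of constituents with eigenvalue $+1$ and $-1$; it then suffices to prove that there are exactly two constituents, one in each eigenspace, which I would do by computing $\dim\operatorname{End}_{\KK_i}(S_{i,0})=2$ through the relevant character inner product. Here too the even residual characteristic is the crux: because the coefficient ring $\oo/2\varpi\oo$ is not a field, the classical splitting of the Weil representation of $\Sp_{2i}(\mathbf{F}_q)$---recovered when $e=0$---cannot be quoted and must be re-established over this non-reduced ring.
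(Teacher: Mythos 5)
Your plan for the filtration follows the same route as the paper (check a generating set: geometric Levi elements, Siegel unipotents via the conductor, a Fourier-type element via lattice duality), but two of its claims fail as stated. First, for $i>0$ the Weyl element $w$ is not in $K_i$, and the Levi and Siegel families intersected with $K_i$ lie inside the Siegel parabolic, so together they generate only a subgroup of that parabolic's intersection with $K_i$; the missing generator is $\eta_i w$ (a torus element times $w$), and the paper's argument rests on the identity $K_i=\langle I,\,\eta_i w\rangle$ together with the formula $[\ce\eta_i\ww\phi](y)=c\,\widehat\phi(\varpi^{-1}y_1,\dots,\varpi^{-1}y_i,y_{i+1},\dots,y_n)$. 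Second, your expectation that the Fourier-type generators ``move each term of the filtration to its neighbour, interchanging the $S$-type and $S'$-type spaces'' cannot be correct: for $0<i<n$ the terms have pairwise distinct dimensions, $\dim S_{i,m}=q^{(e+2m)n+i}$ and $\dim S'_{i,m}=q^{(e+2m)n-i}$, so no invertible group action can permute them nontrivially. What must be shown --- and what the paper shows via Lemma \ref{fourier_transform_space} --- is that $\ce\eta_i\ww$ preserves each term separately, the rescaling by $\eta_i$ exactly undoing the Fourier transform's swap of support and invariance lattices.

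For irreducibility your route (finite Heisenberg group on $\LL_i^\ast/2\LL_i$, Stone--von Neumann, then $\dim\op{End}_{\KK_i}(S_{i,0})=2$ by a character inner product) is genuinely different from the paper's, and its setup is sound: with the pairing $(u,v)\mapsto\psi(2Q(u,v))$ the quotient $\LL_i^\ast/2\LL_i$ is nondegenerate, $S_{i,0}$ is the Stone--von Neumann module, and $\KK_i$ normalizes the finite Heisenberg action. But the final step, which you defer to ``the relevant character inner product,'' is exactly where the even-residual-characteristic difficulty sits, and the classical mechanism does not survive. Over a finite field, $\op{End}(S_{i,0})\cong\bigoplus_a\C\,\rho(a)$ is an honest permutation module for the symplectic group, so the inner product counts orbits (two of them). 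Here it is a \emph{twisted} permutation module: a stabilizer element $g$ with $gv\equiv v \bmod 2\LL_i$ rescales the line $\C\,\rho(v,0)$ by $\psi\big(Q(gv-v,v)\big)$, and since $Q(gv-v,v)\in 2\oo$ while $\psi$ is nontrivial on $2\oo$, these twists do not vanish. Orbit counting therefore gives the wrong answer: for $i=0$ and $e\geq 1$, the group $K_0$ has $e+1$ orbits on $\LL_0/2\LL_0$ (the classes of content $\varpi^k$ for $0\le k<e$, plus zero), whereas the correct value of $\dim\op{End}_{\KK_0}(S_{0,0})$ is $1$ --- note also that your claimed value $2$ is itself wrong when $i=0$, since $S_{0,0}^-=0$. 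Deciding which orbits carry trivial twist is equivalent to the conductor analysis the paper packages into Lemma \ref{boring_lemma} and Corollary \ref{characters_of_sym}, followed by its explicit connectedness arguments ($\widehat\phi_0=\sum_x\phi_x$ for the even part, and the $\GL_n(\oo)$-orbit argument for the odd part of the $\KK_n$-type). As written, this step is a genuine gap, not a routine verification.
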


To describe these types more explicitly, denote the $\oo$-span of any $r$ basis elements of $Y = \kk^n$ by $\oo^r$.  That is, $L_i = \varpi \oo^i \oplus \oo^{n-i}$ and $L'_i = \oo^n$.

For $i = 0$, the filtration is
	$S_{0,0} \subset S_{0,1} \subset S_{0,2} \subset \dots$
and the minimal type of $\KK_0$ is 
\[
	S_{0,0} 
	=	S^+_{0,0} 
	=	S( L_0 / 2 L_0 ) 
	=	S( \oo^n / 2 \oo^n).
\]
The dimension of $S_{0,0}$ is $q^{en}$.

For $i = n$, the filtration is
	$S_{n,0} \subset S_{n,1} \subset S_{n,2} \subset \dots$
and the minimal type of $\KK_n$ decomposes as 
	$S_{n,0} 
	=	S^+_{n,0} \oplus S^-_{n,0}$, 
where
\[
	S_{n,0}^\pm 
	=	S( L'_n / 2 L_n )^\pm 
	=	S( \oo^n / 2 \varpi \oo^n )^\pm.
\]
The dimension of $S_{n,0}^\pm$ is $\tfrac{1}{2} q^{en} (q^n \pm 1)$.

Generally, the minimal type of $\KK_i$ decomposes as 
	$S_{i,0} 
	=	S^+_{i,0} \oplus S^-_{i,0}$, 
where
\[
	S_{i,0}^\pm 
	=	S( L'_i / 2 L_i )^\pm 
	=	S( \oo^i / 2 \varpi \oo^i )^\pm 
			\otimes 
		S( \oo^{n-i} / 2 \oo^{n-i} ).	
\]
The dimension of $S_{i,0}^\pm$ is $\frac{ q^{en} }{2} (q^i \pm 1)$.
  

The first six sections are dedicated to providing the necessary background material and notation.  Theorem \ref{main_theorem} is proved in \S\ref{K-types} as Theorems \ref{K_0-theorem}, \ref{K_n-theorem}, and \ref{K_i-theorem}.

\section*{Notation}
\label{notation}

Throughout this paper, let $\kk$ be a nonarchimedian local field; assume the characteristic of $\kk$ is different from 2.  Let $\oo$ be the ring of integers and $\varpi$ a uniformizer of $\oo$.  Denote by $q$ the order of the residue field $\oo / \varpi \oo$.

Let $e$ be the valuation of 2 in $\kk$: if $q$ is even, then $e$ is the ramification index of 2; if $q$ is odd, then $e = 0$. 

Fix a nontrivial, smooth, additive character $\psi$ of $\kk$; assume that the conductor of $\psi$ is $c = 2e$, i.e., that $\psi(t x) = 1$ for all $t \in \oo$ if and only if $x \in 4\oo$.

For a $2n$-dimensional symplectic vector space over any field, fix a symplectic basis $\{ e_1, \dots, e_n, f_1, \dots, f_n\}$ so that the symplectic form $Q$ is given by
\[
	Q(u,v) 
	=	\tp{u} \mtwo{0}{1}{-1}{0} v.
\]
Let $X$ be the subspace spanned by $\{e_1, \dots, e_n\}$ and $Y$ the subspace spanned by $\{f_1,\dots, f_n\}$; the decomposition $W = X + Y$ is a polarization of the symplectic space.

\section{Schwartz functions}
\label{schwartz_functions}

Let $V$ be a finite-dimensional vector space over $\kk$ with a Haar measure $dv$.  A Schwartz function on $V$ is a smooth, compactly supported, complex-valued function; the space of such functions will be designated by $S(V)$.

Let 
	$V^\ast 
	=	\op{Hom}_\kk(V, \kk)$ 
be the linear dual of $V$.  Define the Fourier transform, relative to $\psi$, to be the map from $S(V)$ to $S(V^\ast)$, written 
	$\phi \mapsto \widehat{\phi}$, 
given by
\[
	\widehat{\phi}(v^\ast)
	=	\int_V \psi(2 \langle v, v^\ast \rangle) \phi(v) dv.
\]
Here, $\langle\;\,,\;\rangle$ is the canonical pairing on $V \times V^\ast$.  The Haar measure $dv^\ast$ on $V^\ast$ is normalized so that the Fourier transform is self-dual:
\[
	\widehat{\widehat{\phi}}(-v) 
	=	\phi(v).
\]
	
By fixing a basis of $V$ and identifying $v^\ast \in V^\ast$ with $v \in V$ by 
	$\langle u, v^\ast \rangle 
	=	\tp{u} v$, 
the Fourier transform becomes the operator on $S(V)$ given by
\[
	\widehat{\phi}(u) 
	=	\int_V \psi(2 \tp{v} u) \phi(v) dv.	
\]
The identification of $V$ and $V^\ast$ gives a unique normalization of the Haar measure $dv$.  Moreover, $S(V)$  has the tensor product structure 
	$S(\kk) \otimes \cdots \otimes S(\kk)$, 
and the Fourier transform on $S(V)$ is precisely the Fourier transform on each of the $S(\kk)$ factors.

Fix the lattice $L$ to be the $\oo$-span of the chosen basis of $V$.  For each $\phi \in S(V)$, there exist integers $r,s$ with $r \leq e + s$, such that $\phi$ is supported on $\varpi^r L$ and invariant under translation by $2 \varpi^s L$.  The subspace of such functions will be identified with the space of Schwartz functions on 
	$\varpi^r L / 2 \varpi^s L$, 
of dimension $q^{(e + s - r) n}$.

Note that $v \in V$ is congruent to $-v$ modulo $2 \varpi^s L$ if and only if $v \in \varpi^s L$; that is, for $r \geq s$, there are no odd functions in 
	$S(\varpi^r L / 2 \varpi^s L)$.  
Otherwise, it decomposes into subspaces of even and odd functions, denoted respectively by 
	$S(\varpi^r L / 2 \varpi^s L)^+$ 
and 
	$S(\varpi^r L / 2 \varpi^s L)^-$, 
of dimensions
\[
	\dim S(\varpi^r L / 2 \varpi^s L)^\pm 
	=	\tfrac{1}{2} q^{en} \left(q^{(s - r) n} \pm 1 \right).	
\]
	
To be explicit, for $U \subset V$, designate the characterstic function on $U$ by $\ch(U)$.  For a fixed $s \in \Z$, if $v \in \varpi^s L$, define the even function 
	$\phi_v^+ 
	=	\ch(v + 2 \varpi^s L)$; 
otherwise, define the even function 
\[
	\phi_v^{+} 
	=	\ch(v + 2 \varpi^s L) + \ch(-v + 2 \varpi^s L)
\]
and the odd function
\[
	\phi_v^- 
	=	\ch(v + 2 \varpi^s L) - \ch(-v + 2 \varpi^s L).
\]
(Note that only one of $\{v, -v\}$ is needed in order to define a set of linearly independent functions.)  The functions $\phi_v^\pm$, as $v$ ranges over 
	$\varpi^r L / 2 \varpi^s L$ (modulo $\pm1$), 
form a basis for 
	$S(\varpi^r L / 2 \varpi^s L)^\pm$, 
giving the dimension count above.

\begin{lemma}
\label{fourier_transform_space}
For any integers $r,s$ with $r \leq e + s$, the Fourier transform maps 
\[
	S(\varpi^r L / 2 \varpi^s L) 
		\to
	S(\varpi^{-s} L / 2 \varpi^{-r} L).		
\]
Moreover, the Fourier transform of 
	$\ch( 2 \varpi^s L )$ 
is 
	$\vol( 2 \varpi^s L ) \ch( \varpi^{-s} L )$.		
\end{lemma}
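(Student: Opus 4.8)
The plan is to reduce the statement to two elementary facts about how the Fourier transform interacts with lattices, and then to track the factor of $2$ carefully, since $\val(2) = e$ is exactly the feature that distinguishes even from odd residual characteristic. For a lattice $M \subset V$ I would write $M^\perp = \{ u : \psi(2\, \tp{v} u) = 1 \text{ for all } v \in M \}$ for its annihilator under the pairing defining the transform. Two observations, both immediate from invariance of the Haar measure, give the underlying duality principle. If $\phi$ is supported on $M$, then for $u_0 \in M^\perp$ one computes $\widehat{\phi}(u + u_0) = \widehat{\phi}(u)$, so $\widehat{\phi}$ is invariant under translation by $M^\perp$. Dually, if $\phi$ is invariant under translation by a lattice $N$, then substituting $v \mapsto v + v_0$ for $v_0 \in N$ shows $\widehat{\phi}(u) = \psi(2\, \tp{v_0} u)\, \widehat{\phi}(u)$, which forces $\widehat{\phi}(u) = 0$ whenever $u \notin N^\perp$. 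Thus the Fourier transform interchanges ``supported on $M$'' with ``invariant under $M^\perp$'', and ``invariant under $N$'' with ``supported on $N^\perp$''.

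The next step is to compute the relevant annihilators, and this is where the factor of $2$ enters. Working in coordinates and testing the defining condition against the vectors $t\varpi^a \epsilon_j$, where $\epsilon_j$ is the $j$-th vector of the chosen basis of $V$ and $t \in \oo$, the conductor hypothesis (namely $\psi(t x) = 1$ for all $t \in \oo$ iff $x \in 4\oo = \varpi^{2e}\oo$) gives $(\varpi^a L)^\perp = \varpi^{e - a} L$. Since $2 = \varpi^e \cdot (\text{unit})$, the lattice $2\varpi^b L$ equals $\varpi^{e+b} L$; in particular $(2\varpi^s L)^\perp = \varpi^{-s} L$ and $2\varpi^{-r} L = \varpi^{e - r} L = (\varpi^r L)^\perp$.

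Applying the duality principle to $\phi \in S(\varpi^r L / 2\varpi^s L)$ then yields the first assertion: support on $\varpi^r L$ forces $\widehat{\phi}$ to be invariant under $(\varpi^r L)^\perp = 2\varpi^{-r} L$, and invariance under $2\varpi^s L$ forces $\widehat{\phi}$ to be supported on $(2\varpi^s L)^\perp = \varpi^{-s} L$, so $\widehat{\phi} \in S(\varpi^{-s} L / 2\varpi^{-r} L)$. The hypothesis $r \leq e + s$ is precisely what makes the target space well-defined, since it is equivalent to the inclusion $2\varpi^{-r}L = \varpi^{e-r} L \subseteq \varpi^{-s} L$ of invariance lattice inside support lattice. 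For the explicit formula I would then compute $\widehat{\ch(2\varpi^s L)}(u) = \int_{2\varpi^s L} \psi(2\, \tp{v} u)\, dv$ directly: the integrand is the constant $1$ when $u \in (2\varpi^s L)^\perp = \varpi^{-s} L$, and otherwise it is a nontrivial character integrated over a lattice, hence $0$; this gives $\vol(2\varpi^s L)\, \ch(\varpi^{-s} L)$.

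The only real obstacle here is bookkeeping rather than anything conceptual: one must consistently convert each factor of $2$ into a shift by $\varpi^e$ and apply the conductor normalization in the correct direction. When the residual characteristic is odd, $e = 0$ and $2$ is a unit, so every $\varpi^e$ shift disappears and the statement collapses to the familiar self-dual lattice picture; the genuine content in the even case is exactly the careful tracking of these shifts when $e > 0$.
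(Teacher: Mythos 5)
Your proof is correct and takes essentially the same route as the paper's: the identical character-orthogonality computation with the pairing $\psi(2\,\tp{u}v)$ underlies both arguments, and the evaluation of $\widehat{\ch(2\varpi^s L)}$ is the same direct integral. The only difference is organizational: the paper proves just the invariance-implies-support half directly and deduces the other half from self-duality of the Fourier transform, whereas you prove both halves by direct computation in annihilator language, explicitly identifying $(\varpi^a L)^\perp = 2\varpi^{-a}L$ via the conductor hypothesis and noting that $r \leq e+s$ is exactly the condition making the target space well-defined.
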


\begin{proof}
Suppose that $\phi$ is invariant under translation by any $z \in 2 \varpi^s L$.  Then,
\[
	\widehat{\phi}(v)
	=	\int_V \psi( 2 \tp{u} v ) \phi(u) du 
	=	\int_V \psi( 2 \tp{u} v ) \phi(u + z) du 
	=	\psi( -2 \tp{z} v ) \widehat{\phi}(v).
\]
Hence, either 
	$\widehat{\phi}(v) 
	=	0$, 
or 
	$\psi( -2 \tp{z} v ) 
	=	1$ 
for all $z \in 2 \varpi^s L$; that is, the support of $\widehat\phi$ is contained in $\varpi^{-s} L$.  Applying this to $\widehat\phi$ and using the self-duality of the Fourier transform, the first statement is proved.  As
\[
	\widehat{\ch( 2 \varpi^s L} ) (v) 
	=	\int_{2 \varpi^s L} \psi(-2 \tp{u} v ) du
	= 	\begin{cases}
			\vol(2 \varpi^s L) 
				& \text{ if } v \in \varpi^{-s} L 
			\\
			0 	
				& \text{ otherwise},
		\end{cases}
\]
the second statement is also proved.
\end{proof}

\section{Roots and affine roots}
\label{roots_and_affine_roots}

Let $\g$ be a simple Lie agebra over $\C$, $\h$ a Cartan subalgebra, and 
	$\h^\ast 
	=	\op{Hom}_\C( \h, \C)$ 
its linear dual.  The natural pairing 
	$(\;\,,\;): \h \times \h^\ast \to \C$ 
is given by
\[
	(a, \alpha) 
	=	\alpha(a).
\]
Designate by $\Delta \subset \h^\ast$ the roots of $\h$ and choose a subset $\Pi$ of simple roots; write $\Delta^+$ and $\Delta^-$ for the corresponding sets of positive and negative roots, respectively.  For each root $\alpha \in \h^\ast$, define the co-root $\check\alpha \in \h$ by 
	$(\check\alpha,\alpha)=2$. 

For $\alpha \in \h^\ast$ and $m \in \Z$, define the affine functional 
	$\alpha + m: 
		\h \to \C$ 
by
\[
	(\alpha+m)(a) 
	=	\alpha(a) + m.
\]
The set of affine roots is defined to be 
	$\Delta^{\aff} 
	=	\{ 
			\alpha + m 
			: \alpha \in \Delta, m \in \Z 
		\}$.  
If $\delta$ is the highest root in $\Delta$, then 
	$\Pi^{\aff} 
	=	\Pi \cup \{-\delta + 1\}$ 
is a set of simple affine roots.

Let $\h_\R$ be the real vector space spanned by the co-roots. Each 
	$\alpha + m 
	\in	\Delta^{\aff}$ 
determines an affine hyperplane
\[
	P_{\alpha+m} 
	=	\big\{ 
			a \in \h_\R 
			: (\alpha+m)(a) = 0 
		\big\},
\]
and a reflection $s_{\alpha+m}$ of $\h_\R$ across $P_{\alpha+m}$, i.e., 
\[
	s_{\alpha+m}(a) 
	=	a - (\alpha+m)(a) \check\alpha.
\]
For $d \in \h$, let 
	$T(d) : \h \to \h$ 
be the translation $T(d)(a) = a + d$.  In particular,
\[
	s_{\alpha+m} 
	=	T(-m \check\alpha) s_\alpha.
\]
	
The Weyl group $\Omega$ is the group generated by the reflections 
	$\{
		s_\alpha 
		: \alpha \in \Delta 
	\}$; 
similarly, the affine Weyl group $\Omega^{\aff}$ is generated by the affine reflections 
	$\{
		s_\mu 
		: \mu \in \Delta^{\aff} 
	\}$.  
In fact, $\Omega$ is generated by simple reflections 
	$\{
		s_\alpha 
		: \alpha \in \Pi
	\}$ 
and $\Omega^{\aff}$ is generated by simple affine reflections 
	$\{
		s_\mu 
		: \mu \in \Pi^{\aff} 
	\}$.
	
Define the co-root lattice in $\h_\R$ to be the $\Z$-span of the simple co-roots, i.e., the $\Z$-span of 
	$\{ 
		\check\alpha 
		: \alpha \in \Pi 
	\}$, 
and let $D$ be the group of translations by elements of the co-root lattice.  Since 
	$s_{-\delta+1} 
	=	T(\check\delta) s_{-\delta}$ 
and 
	$w T(d) w^{-1} 
	=	T\big( w(d) \big)$ 
for all $w \in \Omega$, the affine Weyl group $\Omega^{\aff}$ is the semidirect product $D \Omega$.

The chambers of $\Delta^{\aff}$ are the connected components of the complement in $\h_\R$ to the collection of hyperplanes 
	$\{
		P_\mu 
		: \mu \in \Delta^{\aff}
	\}$.
As the affine Weyl group permutes the collection of affine hyperlanes, it acts on the set of chambers; this action is simply transitive.  The fundamental chamber is the open set
\[
	\mathcal{C}_0 
	= 	\big\{ 
			a \in \h 
			: 0 < (a, \mu) < 1 \text{ for all } \mu \in \Pi^{\aff} 
		\big\},
\]
which is bounded by the hyperplanes 
	$\{ P_\mu : \mu \in \Pi^{\aff} \}$.  
See \cite[\S1.3]{iwahori-matsumoto} for details.

\section{Symplectic Lie algebras}
\label{symplectic_lie_algebras}

Let 
	$W_\C 
	=	X_\C + Y_\C$ 
be a $2n$-dimensional complex symplectic vector space. The symplectic Lie algebra $\spp(W_\C)$ is the algebra of endomorphisms $T$ of $W_\C$ satisfying $Q(Tu,v) + Q(u,Tv) = 0$ for all $u,v \in W_\C$.  

With respect to the symplectic basis, the symplectic Lie algebra is the subalgebra of $2n \times 2n$ matrices given by
\[
	\spp(W_\C) 
	=	\left\{ 
			\mtwo{a}{b}{c}{-\tp{a}} 
			: b = \tp{b}, c = \tp{c} 
		\right\}.
\]
Let $\h$ be the Cartan algebra consisting of diagonal matrices,
\[
	\h 
	=	\big\{ 
			a = \op{diag}( a_1, \dots, a_n, -a_1, \dots, -a_n ) 
			: a_i \in \C 
		\big\} 
	\cong \C^n.
\]
If 
	$\{
		\epsilon_1, \dots, \epsilon_n
	\}$ 
is the dual basis of $\h^\ast$, i.e., $\epsilon_i(a) = a_i$, then the set of roots is
\[
	\Delta 
	=	\big\{ 
			\pm ( \epsilon_i \pm \epsilon_j ) 
			: 1 \leq i < j \leq n 
		\big\}
	\cup
		\big\{ 
			\pm 2\epsilon_i 
			: 1 \leq i \leq n 
		\big\},
\]
and the simple roots may be taken to be 
	$\Pi 
	=	\{
			\alpha_1, \dots, \alpha_n
		\}$, 
where $\alpha_n = 2\epsilon_n$ and $\alpha_i = \epsilon_i - \epsilon_{i+1}$ for $i < n$.  The simple affine roots are 
	$\Pi^{\aff} 
	=	\{
			\alpha_0, \alpha_1, \dots, \alpha_n
		\}$, 
where $\alpha_0 = -\delta +1 = -2\epsilon_1 + 1$.  

The Weyl group $\Omega$ is generated by $s_{\alpha_1}, \dots, s_{\alpha_n}$, and the affine Weyl group $\Omega^{\aff}$ is generated by  $s_{\alpha_0}, \dots, s_{\alpha_n}$.  Both of these are Coxeter groups, and the braid relations are given by the extended Dynkin diagram of type C$_n$, cf.\ \cite[\S1.8]{iwahori-matsumoto}.
\begin{center}
\begin{tikzpicture}[scale=.5]
	\draw (0,-.1) --(2,-.1);
	\draw (0,.1) --(2,.1);
	\draw (2,0) --(5,0);
	\draw (7,0) --(8,0);
	\draw (8,-.1) --(10,-.1);
	\draw (8,.1) --(10,.1);
	\draw (9.2,.3) --(8.9,0) --(9.2,-.3);
	\draw (.8,.3) --(1.1,0) --(.8,-.3);
	\draw[fill] (0,0) circle(5pt);
	\draw[fill] (2,0) circle(5pt);
	\draw[fill] (4,0) circle(5pt);
	\draw[fill] (8,0) circle(5pt);
	\draw[fill] (10,0) circle(5pt);
	\node at (0,-.8) {$\alpha_0$};
	\node at (2,-.8) {$\alpha_1$};
	\node at (4,-.8) {$\alpha_2$};
	\node at (10,-.8) {$\alpha_n$};
	\node at (6,0) {$\cdots$};
\end{tikzpicture}
\end{center}

The fundamental chamber 
	$\mathcal{C}_0 \subset \h_\R \cong \R^n$ 
is bounded by the simple affine hyperplanes 
	$P_{\alpha_0}, \dots, P_{\alpha_n}$; 
its vertices are $z_0, \dots, z_n$, where $z_i$ is the intersection of the $P_{\alpha_j}$ such that $j \neq i$, i.e.,
\[
	z_0 
	=	(0, 0, \dots, 0), 
\quad 
	z_1 
	=	(\tfrac{1}{2}, 0, \dots, 0), 
\quad \dots, \quad 
	z_n 
	=	(\tfrac{1}{2}, \tfrac{1}{2}, \dots, \tfrac{1}{2}).
\]

\section{Symplectic groups}
\label{symplectic_groups}

Let $W = X + Y$ be a $2n$-dimensional symplectic vector space over $\kk$.  The symplectic group $\Sp(W)$ is the group of invertible transformations of $W$ that preserve $Q$.  Under the symplectic basis, $\Sp(W)$ is the subgroup of $\GL_{2n}(\kk)$ given by
\[
	\Sp(W) 
	=	\left\{
			\mtwo{a}{b}{c}{d}
			:\begin{array}{r}
				\tp{a} d - \tp{c} b = 1 \\
				\tp{a} c - \tp{c} a = 0 \\
				\tp{b} d - \tp{d} b = 0
			\end{array}
		\right\}.
\]
					
The symplectic group has a maximal torus, $H = (\kk^\times)^n$, where $h = (h_1, \dots, h_n)$ acts on the symplectic basis by 
	$h(e_i) 
	=	h_i e_i$ 
and 
	$h(f_i) 
	=	h_i^{-1} f_i$.  
This torus corresponds to the choice of root system $\Delta$ given in the previous section.  Define 
	$H(\oo)
	=	(\oo^\times)^n$.

For $1 \leq i \leq n$, define $X_i$ to be the span of $e_1, \dots, e_i$.  The stabilizer of the flag 
	$X_1 \subset X_2 \subset \cdots \subset X_n$ 
is a Borel subgroup of $\Sp(W)$ containing $H$; it corresponds to the choice of simple roots $\Pi$ given in the previous section.

For any $\alpha \in \Delta$, there is an injective homomorphism 
	$\Phi_\alpha: 
		\SL_2(\kk) \to \Sp(W)$, 
cf.\ \cite[\S3 Cor.\ 6]{steinberg}.  For $t \in \kk$, write
\[
	x_\alpha(t)
	=	\Phi_\alpha \left( \mtwo{1}{t}{0}{1} \right), 
\quad
	x_{-\alpha}(t) 
	=	\Phi_\alpha \left( \mtwo{1}{0}{t}{1} \right),
\]
and, for $t \in \kk^\times$, write
\begin{align*}
	w_\alpha(t) 
	&	=	x_\alpha(t) x_{-\alpha}(-t^{-1}) x_\alpha(t) 
		=	\Phi_\alpha \left( \mtwo{0}{t}{-t^{-1}}{0} \right),
	\\
	h_\alpha(t) 
	&	=	w_\alpha(t) w_\alpha(-1) 
		=	\Phi_\alpha \left( \mtwo{t}{0}{0}{t^{-1}} \right).
\end{align*}
	
These maps may be realized explicitly as follows.  If $E_{ij}$ is the $n \times n$ matrix with a 1 in the $ij$ position and 0 elsewhere, then the $x_\alpha(t)$ are given by
\begin{align*}
	x_{\epsilon_i - \epsilon_j}(t) 
	&	=	\mtwo{ 1 + t E_{ij}}{0}{0}{1 - t E_{ji} }, 
	\\
	x_{\epsilon_i + \epsilon_j}(t) 
	&	=	\mtwo{1}{ t (E_{ij} + E_{ji}) }{0}{1}, 
	\\
	x_{2\epsilon_i}(t) 
	&	=	\mtwo{1}{ t E_{ii} }{0}{1}.
\end{align*}
Hence, for 
	$\alpha
	=	\epsilon_i - \epsilon_j$, 
	$\Phi_\alpha \big( \SL_2(\kk) \big)$ 
acts as $\SL_2(\kk)$ on the subspaces
\[
	\kk e_i \oplus \kk e_j \subset X
				\quad \text{ and } \quad 
	\kk f_j \oplus \kk f_i \subset Y,
\] 
and, for 
	$\alpha
	=	2 \epsilon_i$, 
	$\Phi_\alpha \big( \SL_2(\kk) \big)$ 
acts as $\SL_2(\kk)$ on the subspace 
\[
	\kk e_i \oplus \kk f_i \subset W.
\]

For any additive subgroup $A \subset \kk$ and any $\alpha \in \Delta$, define the subgroup 	
\[
	\XX_\alpha(A) 
	=	\big\{
			x_\alpha(t) 
			: t \in A 
		\big\} 
	\subset \Sp(W).
\] 
The symplectic group is generated by the $\XX_\alpha(\kk)$, as $\alpha$ ranges over $\Delta$.

For an affine root $\alpha + m$, define 
	$\Phi_{\alpha+m}: 
	\SL_2(\kk) \to \Sp(W)$
by
\[
	\Phi_{\alpha+m} 
		\left( 
			\mtwo{a}{b}{c}{d} 
		\right)
	=	\Phi_{\alpha} 
		\left( 
			\mtwo{a}{\varpi^m b}{\varpi^{-m} c}{d} 
		\right).
\]
The subgroup 
	$\Phi_{\alpha+m} \big( \SL_2(\kk) \big)$
contains the elements
\begin{align*}
	x_{\alpha+m}(t) 
		=	\Phi_{\alpha+m}
				\left( 
					\mtwo{1}{t}{0}{1}
				\right) 
	&	=	x_\alpha( \varpi^m t),
				\quad t \in \kk, 
	\\
	w_{\alpha+m}(t)
		=	\Phi_{\alpha+m}
				\left( 
					\mtwo{0}{t}{-t^{-1}}{0} 
				\right) 
	&	=	w_\alpha( \varpi^m t),
				\quad t \in \kk^\times, 
	\\
	h_{\alpha+m}(t)
		=	\Phi_{\alpha+m}
				\left( 
					\mtwo{t}{0}{0}{t^{-1}} 
				\right) 
	&	=	h_\alpha( t), 
				\quad t \in \kk^\times,
\end{align*}
and is generated by $\XX_{\alpha+m}(\kk)$ and $\XX_{-(\alpha+m)}(\kk)$.	 For 
	$\alpha + m \in \Delta^{\aff}$, 
define the affine root group $\XX_{\alpha+m}$ to be $\XX_{\alpha}(\varpi^m \oo)$.

The torus $H$ is generated by 
	$\{ 
		h_\alpha(t) 
		: \alpha \in \Delta, t \in \kk^\times 
	\}$; 
the normalizer $N$ of the torus is generated by 
	$\{
		h_\alpha(t), w_\alpha(t) 
		: \alpha \in \Delta, t \in \kk^\times 
	\}$. 
The generators of $\Omega \cong N / H$ are represented in $\Sp(W)$ by 
\[
	\big\{
		w_{\alpha_1}(1), \dots, w_{\alpha_n}(1) 
	\big\} 
	= \big\{ 
		w_\alpha(1) 
		: \alpha \in \Pi 
	\big\},
\]
and the generators of $\Omega^{\aff} \cong N / H(\oo)$ are represented by 
\[
	\big\{
		w_{\alpha_0}(1), \dots, w_{\alpha_n}(1) 
	\big\} 
	= \big\{ 
		w_\mu(1) 
		: \mu \in \Pi^{\aff} 
	\big\},
\]
cf.\ \cite[Lemma 22]{steinberg}.  The longest element 
	$s_{2\epsilon_1} \cdots s_{2\epsilon_n}$ 
of the Weyl group is represented in $\Sp(W)$ by
\[
	w = \mtwo{0}{1}{-1}{0}.
\]
	
For an element $d = (d_1, \dots, d_n)$ of the co-root lattice, $T(d)$ is an element of the affine Weyl group.  Under the identification of $s_{\alpha_i}$ and $w_{\alpha_i}(1)$, by \cite[\S2.1]{iwahori-matsumoto}, $T(d)$ may be represented in $\Sp(W)$ as the toral element
\[
	h_{2\epsilon_1}(\varpi^{-d_1}) \cdots \h_{2\epsilon_n}(\varpi^{-d_n}) 
	=	(\varpi^{-d_1}, \dots, \varpi^{-d_n}).
\]

For $0 \leq i \leq n$, define 
	$\eta_i 
	=	h_{2\epsilon_1}(\varpi^{-1}) \cdots h_{2\epsilon_i}(\varpi^{-1})$,
which corresponds to 
	$T(1, \dots, 1, 0, \dots, 0)$, 
where each of the first $i$ components is 1.  The element $\eta_i w$ is the longest element of the affine Weyl group that stabilizes the point $z_i$.

\section{Compact subgroups}
\label{compact_subgroups}

The fundamental chamber corresponds to an Iwahori subgroup $I$ of $\Sp(W)$ in the sense that its unipotent radical is generated by those affine root groups $\XX_\mu$ for which $\mu$ acts positively on $\mathcal{C}_0$; explicitly, $I$ is the group generated by 
	$\XX_\alpha$ for $\alpha \in \Delta^+$, 
	$\XX_{\alpha+1}$ for $\alpha \in \Delta^-$, 
and $H(\oo)$.

The maximal compact subgroups of $\Sp(W)$ containing $I$ are obtained by choosing a maximal proper subset 
	$\Pi^{\aff} \smallsetminus \{ \alpha_i \}$
of the simple affine roots, or, equivalently, by choosing a vertex $z_i$ of the fundamental chamber of $\Delta^{\aff}$. The vertex $z_i$ corresponds to the maximal compact subgroup $K_i$, where $K_i$ is the group generated by those $\XX_\mu$ for which $\mu(z_i) \geq 0$, cf.\ \cite[\S2.5]{iwahori-matsumoto}. 

Removing the simple affine root $\alpha_i$ from the extended Dynkin diagram yields
\begin{center}
\begin{tikzpicture}[scale=.5]
	\draw (2,0) --(5,0);
	\draw (7,0) --(8,0);
	\draw (8,-.1) --(10,-.1);
	\draw (8,.1) --(10,.1);
	\draw (9.2,.3) --(8.9,0) --(9.2,-.3);
	\draw (-.15,-.15) --(.15,.15); \draw (-.15,.15) --(.15,-.15);
	\draw[fill] (2,0) circle(5pt);
	\draw[fill] (4,0) circle(5pt);
	\draw[fill] (8,0) circle(5pt);
	\draw[fill] (10,0) circle(5pt);
	\node at (0,-.8) {$\alpha_0$};
	\node at (2,-.8) {$\alpha_1$};
	\node at (4,-.8) {$\alpha_2$};
	\node at (10,-.8) {$\alpha_n$};
	\node at (6,0) {$\cdots$};
	\node at (16,0) {if $i=0$,};
	\node at (-3,0) {\mbox{ }};
	\node at (18,0) {\mbox{ }};
\end{tikzpicture}
\end{center}

\begin{center}
\begin{tikzpicture}[scale=.5]
	\draw (0,-.1) --(2,-.1);
	\draw (0,.1) --(2,.1);
	\draw (.8,.3) --(1.1,0) --(.8,-.3);
	\draw (2,0) --(3,0);
	\draw (5,0) --(6,0);
	\draw (10,0) --(11,0);
	\draw (13,0) --(14,0);
	\draw (14,-.1) --(16,-.1);
	\draw (14,.1) --(16,.1);
	\draw (15.2,.3) --(14.9,0) --(15.2,-.3);
	\draw[fill] (0,0) circle(5pt);
	\draw[fill] (2,0) circle(5pt);
	\draw[fill] (6,0) circle(5pt);
	\draw[fill] (10,0) circle(5pt);
	\draw[fill] (14,0) circle(5pt);
	\draw[fill] (16,0) circle(5pt);
	\draw (7.85,-.15) --(8.15,.15); \draw (7.85,.15) --(8.15,-.15);
	\node at (0,-.8) {$\alpha_0$};
	\node at (2,-.8) {$\alpha_1$};
	\node at (4,0) {$\cdots$};
	\node at (6,-.8) {$\alpha_{i-1}$};
	\node at (8,-.8) {$\alpha_i$};
	\node at (10,-.8) {$\alpha_{i+1}$};
	\node at (12,0) {$\cdots$};
	\node at (14,-.8) {$\alpha_{n-1}$};
	\node at (16,-.8) {$\alpha_n$};
	\node at (20,0) {if $0 < i < n$,};
\end{tikzpicture}
\end{center}

\begin{center}
\begin{tikzpicture}[scale=.5]
	\draw (0,-.1) --(2,-.1);
	\draw (0,.1) --(2,.1);
	\draw (.8,.3) --(1.1,0) --(.8,-.3);
	\draw (2,0) --(5,0);
	\draw (7,0) --(8,0);
	\draw[fill] (0,0) circle(5pt);
	\draw[fill] (2,0) circle(5pt);
	\draw[fill] (4,0) circle(5pt);
	\draw[fill] (8,0) circle(5pt);
	\draw (9.85,-.15) --(10.15,.15); \draw (9.85,.15) --(10.15,-.15);
	\node at (0,-.8) {$\alpha_0$};
	\node at (2,-.8) {$\alpha_1$};
	\node at (4,-.8) {$\alpha_2$};
	\node at (10,-.8) {$\alpha_n$};
	\node at (6,0) {$\cdots$};
	\node at (16.1,0) {if $i = n$,};
	\node at (-3,0) {\mbox{ }};
	\node at (18,0) {\mbox{ }};
\end{tikzpicture}
\end{center}
so $K_i$ is generated by the 
	$\XX_{\alpha_j}$ ($j \neq i$) 
and 
	$I \cap \Phi_{\alpha_i}\big(\SL_2(\kk)\big)$; 
in other words, $K_i$ is generated by the 
	$\XX_{\alpha_j}$ ($0 \leq j \leq n$) 
along with
	$\XX_{-\alpha_i + 1}$ and $H(\oo)$.

For $0 \leq i \leq n$, it is straightforward to verify that $K_i$ is the stabilizer of the lattice $\LL_i$ and its dual $\LL_i^\ast$, where
\[
	\LL_i 
	=	\big[
			\oo e_1 \oplus \cdots \oplus \oo e_n 
		\big]
	\oplus
		\big[
			\varpi( \oo f_1 \oplus \cdots \oplus \oo f_i )
			\oplus \oo f_{i+1} \oplus \cdots \oplus \oo f_n 
		\big].
\]
Note that $\LL_0$ is the standard lattice, i.e., the $\oo$-span of the symplectic basis, and that
\[
	\LL_n \subset \cdots \subset \LL_1 \subset \LL_0.
\]
	
The element $\eta_i w$ stabilizes $\LL_i$, and is hence an element of $K_i$.  In fact, 
\[
	\eta_i w\XX_{-\alpha_i+1} (\eta_i w)^{-1} 
	=	\XX_{\alpha_i},
\]
so $K_i$ is generated by $I$ and $\eta_i w$.

Note that the stabilizer of $\LL_0$ is 
	$K_0 
	=	\Sp(W) \cap \GL_{2n}(\oo) 
	=	\Sp_{2n}(\oo)$, 
while the stabilizer of $\LL_n$ is 
	$K_n 
	=	g^{-1} K_0 g$, 
where $g \in \GL_{2n}(\kk)$ is given by
\[
	g = \mtwo{ \varpi 1 }{0}{0}{1}.
\]
For $0 < i < n$, $K_i$ contains a proper subgroup
\[
	\langle 
		\XX_{\alpha_j} 
		: j < i 
	\rangle 
		\times 
	\langle 
		\XX_{\alpha_j} 
		: j > i 
	\rangle
		\cong 
	\Sp_{2i}(\oo) \times \Sp_{2(n-i)}(\oo).
\]
The first factor is a 
	`subgroup of type $K_n$' of $\Sp_{2i}(\kk)$; 
the second factor is a 
	`subgroup of type $K_0$' of $\Sp_{2(n-i)}(\kk)$.

\section{Weil representation}
\label{weil_representation}

The Heisenberg group $H(W)$ is defined to be the set $W \times \kk$ with group multiplication
\[
	(u, s) \cdot (v, t) 
	=	\big( 
			u + v, 
			s + t + Q(u,v) 
		\big).
\]
The symplectic group $\Sp(W)$ preserves $Q$, hence it acts as a group of automorphisms on $H(W)$ by $g(v, t) = (gv, t)$.

Let $(\rho, S)$ be a representation of $H(W)$ with central character $\psi$; that is, $\rho$ acts by $\psi$ on the center 
	$\{ 0 \} \times \kk \cong \kk$.  
For $g\in \Sp(W)$ the twist of $\rho$ by $g$, given by 
	$\rho^g(v,t) 
	=	\rho(gv, t)$, 
also has central character $\psi$.  The Stone-von Neumann theorem implies that $\rho^g$ is isomorphic to $\rho$, so there exists an intertwining operator $T(g)$ on $S$ such that 
	$T(g) \rho
	=	\rho^g T(g)$.  
This operator is unique up to scalar and thus defines a projective representation $(T, S)$ of $\Sp(W)$, which lifts uniquely to a linear representation $(\omega, S)$ of the two-fold central extension $\ce\Sp(W)$ of $\Sp(W)$, called the Weil representation with respect to $\psi$, cf.\ \cite{weil}.


\subsection{Schr\"odinger's model} 
\label{schrodinger_model}

The polarization $W = X + Y$ gives a realization for $\omega$, called the Schr\"odinger model, on the space $S(Y)$ of Schwartz functions on $Y$, 
\begin{align*}
	\big[ \xx(a) \phi \big](y) 
	&	=	\psi( \tp{y} a y ) \phi(y),
	\\
	\big[ \hh(a) \phi \big](y) 
	&	=	\beta_a | \det a |^{\fr{1}{2}} \phi( \tp{a} y ), 
	\\
	\big[ \ww \phi \big](y) 
	&	=	\gamma_1 \widehat{\phi}(y).
\end{align*}
Here,
\begin{align*}
	\xx(a) \text{ is a lift of } 
	&	\mtwo{1}{a}{0}{1} 
		\text{ for } a \in M_n(\kk) \text{ with } \tp{a} = a, 
	\\
	\hh(a) \text{ is a lift of } 
	&	\mtwo{a}{0}{0}{ \tpinv{a} }
		\text{ for } a \in \GL_n(\kk), 
	\\
	\ww \text{ is a lift of } w = 
	&	\mtwo{0}{1}{-1}{0}.
\end{align*}
See \cite[\S1.7]{wood} for the details of this particular description.  The constants $\beta_a$ and $\gamma_1$ are complex roots of unity, but their specific values are, at present, of no consequence.  Moreover, as the specific values of $\beta_a$, $\det a$, and $\gamma_1$ will play no role in this paper, the letter $c$ will frequently be used in their stead.  The exception to this is the following fact: if $\det a = 1$, then $\beta_a = 1$.

As $\ce\Sp(W)$ is a central extension of $\Sp(W)$, the elements $x_\alpha(t) \in \Sp(W)$ lift canonically to 
	$\xx_\alpha(t) \in \ce\Sp(W)$; 
the $\xx_\alpha(t)$ are additive in $t$, so each affine root group $\XX_\mu$ lifts uniquely to $\ce\XX_\mu$, cf.\ \cite[Thm 10]{steinberg}.  Let $\HH$ and $\HH(\oo)$ be the full inverse images of $H$ and $H(\oo)$, respectively.

\begin{lemma} 
\label{chevalley_action}
The Chevalley generators $\xx_\alpha(t)$ act on $S(Y)$ by
\begin{align*}
	\big[ \xx_{\epsilon_j - \epsilon_k}(t) \phi \big](y) 
	&	=	\phi( y + t y_j f_k ), 
	\\
	\big[ \xx_{\epsilon_j + \epsilon_k}(t) \phi \big](y) 
	&	=	\psi( 2 t y_j y_k ) \phi(y), 
	\\
	\big[ \xx_{2\epsilon_j}(t) \phi \big](y) 
	&	=	\psi(t y_j^2) \phi(y).
\end{align*}
A lift $\hh \in \HH$ of $h = (h_1, \dots, h_n) \in H$ acts by
\[
	\big[ \hh \phi \big](y) 
	=	c \phi\big( h^{-1}(y) \big) 
	=	c \phi( h_1 y_1, \dots, h_n y_n ).
\]
\end{lemma}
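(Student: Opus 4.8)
The plan is to read off each formula by recognizing the explicit matrix form of the relevant Chevalley generator, recorded in \S\ref{symplectic_groups}, as one of the three basic operators $\xx(a)$, $\hh(a)$, $\ww$ whose Schr\"odinger action is given in \S\ref{schrodinger_model}, and then to carry out the resulting substitution in $S(Y)$. The entire computation is a matching of explicit matrices followed by evaluation of a quadratic form or a linear substitution.

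First I would treat the generators attached to the roots $\epsilon_j + \epsilon_k$ and $2\epsilon_j$. By the formulas in \S\ref{symplectic_groups}, each is of the form $\mtwo{1}{a}{0}{1}$ with $a$ symmetric, namely $a = t(E_{jk} + E_{kj})$ and $a = t E_{jj}$ respectively, so that (granting the identification discussed below) it acts as $\xx(a)$, i.e. by $[\xx(a)\phi](y) = \psi(\tp{y} a y)\phi(y)$. It then remains to evaluate $\tp{y} a y$, which gives $2 t y_j y_k$ in the first case and $t y_j^2$ in the second, yielding the two multiplication formulas.

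Next, for the root $\epsilon_j - \epsilon_k$ the generator $x_{\epsilon_j - \epsilon_k}(t) = \mtwo{1 + t E_{jk}}{0}{0}{1 - t E_{kj}}$ is block-diagonal of the form $\mtwo{a}{0}{0}{\tpinv{a}}$ with $a = 1 + t E_{jk}$, since $j \neq k$ makes $a$ unipotent and forces $\tpinv{a} = 1 - t E_{kj}$. Hence it acts as $\hh(a)$, by $[\hh(a)\phi](y) = \beta_a |\det a|^{\fr{1}{2}} \phi(\tp{a} y)$; here $\det a = 1$, so the stated fact gives $\beta_a = 1$ and $|\det a|^{\fr{1}{2}} = 1$, and computing $\tp{a} y = (1 + t E_{kj}) y = y + t y_j f_k$ gives the claimed translation. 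The action of a lift $\hh$ of a toral element $h = (h_1, \dots, h_n)$ is identical: $h$ is the image of $\mtwo{a}{0}{0}{\tpinv{a}}$ with $a = \op{diag}(h_1, \dots, h_n)$, so $[\hh\phi](y) = \beta_a |\det a|^{\fr{1}{2}} \phi(\op{diag}(h_1,\dots,h_n) y) = c\,\phi(h_1 y_1, \dots, h_n y_n)$, the scalar $\beta_a |\det a|^{\fr{1}{2}}$ being absorbed into $c$.

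The one point requiring care — the main obstacle — is the identification of the canonical lift $\xx_\alpha(t)$ of each Chevalley generator with the particular operator $\xx(a)$ or $\hh(a)$ above, since a priori two lifts of the same element of $\Sp(W)$ differ by the nontrivial central element of $\ce\Sp(W)$. I would resolve this by the same mechanism that splits the cover over unipotent subgroups: each operator family written above is visibly a homomorphism from $(\kk, +)$, because the phases $\psi(\cdots)$ multiply and the matrices $1 + t E_{jk}$ compose additively (as $E_{jk}^2 = 0$ when $j \neq k$), so it is an additive section over the corresponding root subgroup and therefore coincides with the canonical lift by the uniqueness in \cite[Thm 10]{steinberg}. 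Equivalently, any sign discrepancy would define a homomorphism $(\kk, +) \to \{\pm 1\}$, which is trivial since $\op{char}\kk \neq 2$. Once this identification is in place, the stated formulas follow by the direct substitutions above.
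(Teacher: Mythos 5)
Your proposal is correct and is essentially the argument the paper leaves implicit: the lemma is stated without proof, being the direct matching of the explicit matrices for $x_\alpha(t)$ in \S\ref{symplectic_groups} with the operators $\xx(a)$, $\hh(a)$ of \S\ref{schrodinger_model}, exactly as you carry out. Your care with the lift identification (additivity in $t$ plus uniqueness of the lift of each root group, or triviality of homomorphisms $(\kk,+)\to\{\pm 1\}$) is the same mechanism the paper invokes just before the lemma via \cite[Thm 10]{steinberg}.
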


\section
{$\ce K$-types in Schr\"odinger's model} 
\label{K-types}

The maximal compact subgroups of $\ce\Sp(W)$ are the full inverse images of those of $\Sp(W)$; denote by $\KK_i$ the full inverse image of $K_i$.  Recall that $K_i$ is the stabilizer of the lattice
\[
	\LL_i 
	=	\big[ 
			\oo e_1 \oplus \cdots \oplus \oo e_n 
		\big]
	\oplus
		\big[ 
			\varpi ( \oo f_1 \oplus \cdots \oplus \oo f_i ) 
			\oplus \oo f_{i+1} \oplus \cdots \oplus \oo f_n 
		\big];
\]
it is also the stabilizer of the dual lattice
\[
	\LL_i^\ast 
	=	\big[ 
			\varpi^{-1} ( \oo e_1 \oplus \cdots \oplus \oo e_i )
			\oplus \oo e_{i+1} \oplus \cdots \oplus e_n 
		\big]
	\oplus
		\big[ 
			\oo f_1 \oplus \cdots \oplus f_n 
		\big].
\]
Define
\[
	L_i 
	=	\LL_i \cap Y 
	=	\varpi \oo^i \oplus \oo^{n-i}
				\quad \text{ and } \quad
	L'_i 
	=	\LL_i^\ast \cap Y 
	=	\oo^n.
\]
Then,
\[
	\cdots \subset
		\varpi L_i \subset 
		\varpi L'_i \subset
		L_i \subset 
		L'_i \subset
		\varpi^{-1} L_i \subset 
		\varpi^{-1} L'_i \subset 
	\cdots.
\]

Suppose that $\phi \in S(Y)$ is invariant under translation by 
\[
	2 \varpi^m L_i 
	=	2 \varpi^m( \varpi \oo^i \oplus \oo^{n-i} )
\]  
so that $\widehat\phi$, by Lemma \ref{fourier_transform_space}, is supported on 
	$\varpi^{-m}( \varpi^{-1} \oo^i \oplus \oo^{n-i} )$.  
If $\ce \eta_i$ is a lift of $\eta_i$, then $\ce\eta_i \ww \phi$ is supported on $y \in Y$ such that
\[
	\eta_i (y) \in \varpi^{-m} ( \varpi^{-1} \oo^i \oplus \oo^{n-i} )
			\quad \iff \quad
	y \in \varpi^{-m} L'_i.
\]
Similarly, if $\phi$ is invariant under translation by $2 \varpi^m L'_i$, then $\ce\eta_i \ww \phi$ is supported on $\varpi^{-m} L_i$.  Therefore, since $\ce \eta_i \ww \in \KK_i$,
\[
	S_{i,m}
	=	S\big( \varpi^{-m} L'_i / 2 \varpi^m L_i \big)
					\quad \text{ and } \quad
	S'_{i,m} 
	=	S\big( \varpi^{-m} L_i / 2 \varpi^m L'_i \big)
\]
are candidate spaces for the action of $\KK_i$.  There are, of course, restrictions on the permissible values of $m$: $S_{i,m}$ only makes sense for $2m \geq -e - 1$ while $S'_{i,m}$ only makes sense for $2m \geq -e$.

\begin{lemma} \label{geometric_action}
For $0 \leq i \leq n$, $\HH(\oo)$ and $\ce\XX_{\epsilon_j - \epsilon_k}$ preserve $S_{i,m}$ and $S'_{i,m}$.
\end{lemma}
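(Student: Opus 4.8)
The plan is to reduce the statement to two lattice-theoretic checks for each generator. Both $S_{i,m} = S(\varpi^{-m}L'_i / 2\varpi^m L_i)$ and $S'_{i,m} = S(\varpi^{-m}L_i / 2\varpi^m L'_i)$ have the form $S(\Lambda / \Lambda')$ for a nested pair of lattices $\Lambda' \subset \Lambda$ in $Y$, and membership $\phi \in S(\Lambda/\Lambda')$ is equivalent to the two conditions $\op{supp}\phi \subseteq \Lambda$ and $\phi(y+w) = \phi(y)$ for all $w \in \Lambda'$. It therefore suffices to show that each generator carries functions meeting these two conditions to functions meeting them again. All four relevant lattices are direct sums of coordinate lattices $\varpi^{a_l}\oo \subseteq \kk$, with $a_l$ independent of $l$ for the ``uniform'' lattices $\varpi^{-m}L'_i = \varpi^{-m}\oo^n$ and $2\varpi^m L'_i = 2\varpi^m\oo^n$, and with an extra $\varpi$ in the first $i$ slots for the ``mixed'' lattices $\varpi^{-m}L_i$ and $2\varpi^m L_i$.

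First I would dispose of the torus. By Lemma \ref{chevalley_action} a lift $\hh$ of $h = (h_1,\dots,h_n) \in H(\oo)$ acts by $[\hh\phi](y) = c\,\phi(h_1 y_1,\dots,h_n y_n)$, that is, by precomposition with $A = \op{diag}(h_1,\dots,h_n)$ up to a scalar $c$ that plays no role. Since each $h_l \in \oo^\times$, we have $A \in \GL_n(\oo)$, and $A$ preserves every coordinate lattice $\varpi^{a_l}\oo$ slot by slot; hence $A(\Lambda) = \Lambda$ and $A(\Lambda') = \Lambda'$ for each of the four lattices above. Both the support condition and the periodicity condition are thus preserved, so $\HH(\oo)$ preserves $S_{i,m}$ and $S'_{i,m}$.

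The substance is the root groups. By Lemma \ref{chevalley_action}, $\xx_{\epsilon_j - \epsilon_k}(t)$ acts by $\phi \mapsto \phi\circ\sigma$, where $\sigma = \sigma_{jk}(t)$ is the linear shear $\sigma(y) = y + t y_j f_k$ adding $t y_j$ to the $k$-th coordinate and fixing the others; here $t$ ranges over the coefficient ring of the affine root group, so $t \in \oo$ or $t \in \varpi\oo$ according to how $\ce\XX_{\epsilon_j - \epsilon_k}$ sits inside $I \subset K_i$. Since $\sigma$ is linear with $\sigma^{-1} = \sigma_{jk}(-t)$, one has $\op{supp}(\phi\circ\sigma) = \sigma^{-1}(\op{supp}\phi)$ and $(\phi\circ\sigma)(y+w) = \phi(\sigma(y)+\sigma(w))$; consequently both conditions are inherited as soon as $\sigma$ stabilizes the two lattices $\Lambda$ and $\Lambda'$ attached to $S_{i,m}$ (resp.\ $S'_{i,m}$). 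Because $\sigma$ alters only the $k$-th coordinate, stabilizing a coordinate-sum lattice $\Lambda = \bigoplus_l \varpi^{a_l}\oo$ amounts to the single inclusion $t\cdot\varpi^{a_j}\oo \subseteq \varpi^{a_k}\oo$, i.e.\ to the numerical inequality $a_j + \val(t) \geq a_k$.

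The only real work, then, is to verify this inequality for the four lattices. For the uniform lattices all $a_l$ are equal, so the inequality is automatic and $\sigma$ stabilizes them for any $t \in \oo$. The genuine check is for the mixed lattices $\varpi^{\pm m}L_i$, whose first $i$ slots carry an extra power of $\varpi$: here $a_j$ and $a_k$ differ exactly when $j$ and $k$ lie on opposite sides of $i$, and I expect the main obstacle to be confirming that the coefficient $t$ supplies the compensating $\varpi$ in precisely the crossing case. This is where the Iwahori structure enters: for a positive root ($j<k$) the offending configuration $a_j < a_k$ cannot occur, so $t \in \oo$ suffices; for a root direction crossing the index $i$ with $k \le i < j$ one has $a_j = 0 < 1 = a_k$, and the affine root group inside $K_i$ then carries the extra factor $\varpi$ (so $\val(t) \ge 1$), exactly matching the $\varpi$ by which $L_i$ differs in its first $i$ slots. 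Assembling these cases gives the inequality, hence the stabilization of all four lattices, and therefore the claim for both $S_{i,m}$ and $S'_{i,m}$.
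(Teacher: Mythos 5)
Your proposal is correct, and its skeleton is the same as the paper's: both reduce, via Lemma \ref{chevalley_action}, to checking that the diagonal action of $(\oo^\times)^n$ and the shears $y \mapsto y + t y_j f_k$ stabilize the pairs of lattices defining $S_{i,m}$ and $S'_{i,m}$. The difference is in how the shear coefficient $t$ is quantified, and there your version is genuinely more careful than the paper's. The paper's proof simply asserts that for $t \in \oo$ these maps preserve the quotients; in the crossing configuration $k \le i < j$ with $0<i<n$ this is actually false --- for $n=2$, $i=1$, $m=0$, the function $\ch(2\varpi\oo \oplus 2\oo) \in S_{1,0}$ is sent by $\xx_{\epsilon_2 - \epsilon_1}(1)$ to a function that is no longer invariant under translation by $2f_2 \in 2L_1$ --- and the paper gets away with this only because it invokes the lemma solely for $\HH(\oo)$ and the positive simple root groups $\ce\XX_{\alpha_j}$ ($0<j<n$), handling the one genuinely crossing generator of $\KK_i$ by the separate conjugation identity $\ce\eta_i \ww\, \ce\XX_{\alpha_i} \ww^{-1} \ce\eta_i^{-1} = \ce\XX_{-\alpha_i+1}$ inside the proof of Theorem \ref{K_i-theorem}. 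You instead detect exactly where $t \in \oo$ is insufficient and repair it at the source: the root group in direction $\epsilon_j - \epsilon_k$ with $k \le i < j$ that actually lies in $K_i$ is the affine root group with coefficients in $\varpi\oo$, and $\val(t) \ge 1$ exactly offsets the gap $a_k - a_j = 1$ in your inequality $a_j + \val(t) \ge a_k$. What this buys is a self-contained and correctly quantified lemma (all root groups of type $\epsilon_j - \epsilon_k$ contained in $\KK_i$ preserve $S_{i,m}$ and $S'_{i,m}$), which would let the later theorems cite it uniformly without the conjugation step; the paper's route is shorter, but its statement, read literally with its own convention $\XX_{\epsilon_j-\epsilon_k} = \XX_{\epsilon_j-\epsilon_k}(\oo)$, needs precisely the restriction your valuation bookkeeping supplies.
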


\begin{proof}
The action of these elements is given by Lemma \ref{chevalley_action}.  For $h \in (\oo^\times)^n$ and $t \in \oo$, the maps
\[		
	y \mapsto h^{-1}(y)
		\quad \text{ and } \quad
	y \mapsto y + t y_j f_k		
\]
preserve the quotients 
\begin{align*}
	\varpi^{-m} L'_i / 2 \varpi^m L_i 
	&	=	\oo^i / 2 \varpi \oo^i \oplus \oo^{n-i} / 2 \oo^{n-i},
	\\
	\varpi^{-m} L_i / 2 \varpi^m L'_i
	&	=	\varpi \oo^i / 2 \oo^i \oplus \oo^{n-i} / 2 \oo^{n-i},
\end{align*}
hence the lemma.
\end{proof}

The following lemma and corollary will play a role in computing certain eigenspaces in the Weil representation.

\begin{lemma}
\label{boring_lemma}
Fix $r,s \in \oo$ and $m \in \Z$.  If $\psi(tr^2) = \psi(ts^2)$ for all $t \in \varpi^m \oo$, then $r \equiv \pm s$ modulo $2 \varpi^{-\lfloor \fr{m}{2} \rfloor} \oo$.
\end{lemma}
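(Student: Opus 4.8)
The plan is to convert the character-theoretic hypothesis into a single valuation inequality and then read off the conclusion. First I would rewrite the hypothesis $\psi(tr^2)=\psi(ts^2)$ for all $t\in\varpi^m\oo$ as $\psi\big(t(r^2-s^2)\big)=1$ for all $t\in\varpi^m\oo$, using that $\psi$ is an additive character. Writing $t=\varpi^m t'$ with $t'$ ranging over $\oo$, this says $\psi\big(t'\cdot\varpi^m(r^2-s^2)\big)=1$ for all $t'\in\oo$, so by the defining property of the conductor (namely $\psi(tx)=1$ for all $t\in\oo$ if and only if $x\in 4\oo$) it is equivalent to $\varpi^m(r^2-s^2)\in 4\oo$. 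Since $\val(4)=2e$, this reads $r^2-s^2\in\varpi^{2e-m}\oo$, i.e.\ $\val(r^2-s^2)\ge 2e-m$.

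Next I would factor $r^2-s^2=(r-s)(r+s)$. If either factor vanishes, then $r=\pm s$ exactly and there is nothing to prove, so I may assume both are nonzero. Additivity of the valuation on products then gives
\[
	\val(r-s)+\val(r+s)\ge 2e-m.
\]

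Finally, let $M$ denote the larger of $\val(r-s)$ and $\val(r+s)$. From the displayed inequality, $2M\ge\val(r-s)+\val(r+s)\ge 2e-m$, so $M\ge e-\tfrac{m}{2}$; as $M$ is an integer, this forces $M\ge\lceil e-\tfrac{m}{2}\rceil=e-\lfloor\tfrac{m}{2}\rfloor$. Whichever of $r-s$, $r+s$ realizes $M$ therefore lies in $\varpi^{e-\lfloor m/2\rfloor}\oo=2\varpi^{-\lfloor m/2\rfloor}\oo$ (using $\val(2)=e$), which is precisely the assertion that $r\equiv\pm s$ modulo $2\varpi^{-\lfloor m/2\rfloor}\oo$. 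I do not expect a serious obstacle here; the only points requiring care are the faithful translation of the conductor condition (in particular keeping track of the factor $\val(2)=e$ throughout) and the integer-rounding identity $\lceil e-\tfrac{m}{2}\rceil=e-\lfloor\tfrac{m}{2}\rfloor$, which is exactly what produces the floor appearing in the stated modulus.
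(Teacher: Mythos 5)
Your proof is correct and follows essentially the same route as the paper's: translate the hypothesis via the conductor condition into $\val(r-s)+\val(r+s)\ge 2e-m$, then use the max of the two valuations together with integer rounding to land in $2\varpi^{-\lfloor m/2\rfloor}\oo$. You merely spell out the conductor translation and the degenerate case $r=\pm s$ more explicitly than the paper does.
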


\begin{proof}
Since the conductor of $\psi$ is $2e$, the hypothesis
	$\psi \big( t (r-s) (r+s) \big) = 1$
implies that
\[
	2 \max \{ \val(r-s), \val(r+s) \}
		\ge \val(r-s) + \val(r+s)
		\ge 2e - m.
\]
As $\val(r \pm s)$ is an integer, one of $r-s$ or $r+s$ is in 
$2 \varpi^{-\lfloor \fr{m}{2} \rfloor} \oo$.
\end{proof}

\begin{corollary}
\label{characters_of_sym}
For $x \in \kk^n$ and $m \in \Z$, define the character $\psi_{x,m}$ on 
	$\op{Sym}_n ( \varpi^m \oo )$,
the additive group of symmetric $n \times n$ matrices with entries in $\varpi^m \oo$, by
\[	
	\psi_{x,m}(a)
	=	\psi( \tp{x} a x ).
\]
For $x,y \in \kk^n$, the following conditions hold.
\begin{enumerate}[ \mbox{ } 1.]
	\item 
	If $m \le 1$ and $\psi_{x,m} = \psi_{y,m}$, 
	then $x$ is congruent to $y$ modulo $2 \oo^n$.
	\item
	If $m \le -1$ and $\psi_{x,m} = \psi_{y,m}$, 
	then $x$ is congruent to $\pm y$ modulo $2 \varpi \oo^n$.
\end{enumerate}
\end{corollary}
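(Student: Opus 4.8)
The plan is to test the equality $\psi_{x,m}=\psi_{y,m}$ against a generating set of the additive group $\op{Sym}_n(\varpi^m\oo)$ and feed the resulting one–variable relations into Lemma \ref{boring_lemma}. Since $a\mapsto \tp{x}ax$ is linear in $a$, each $\psi_{x,m}$ is a character of $\op{Sym}_n(\varpi^m\oo)$, and this group is freely generated over $\oo$ by the matrices $E_{jj}$ and $E_{jk}+E_{kj}$ ($j<k$); hence $\psi_{x,m}=\psi_{y,m}$ if and only if the two characters agree on $tE_{jj}$ and on $t(E_{jk}+E_{kj})$ for every $t\in\varpi^m\oo$. Evaluating on the diagonal generator gives $\tp{x}(tE_{jj})x=tx_j^2$, so $\psi(tx_j^2)=\psi(ty_j^2)$ for all $t\in\varpi^m\oo$; evaluating on $t(E_{jk}+E_{kj})$ gives $\psi(2tx_jx_k)=\psi(2ty_jy_k)$ for all $t\in\varpi^m\oo$.

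The diagonal relations are exactly the hypothesis of Lemma \ref{boring_lemma} (whose proof in fact requires nothing beyond $x_j,y_j\in\kk$), so for each $j$ there is a sign $\epsilon_j\in\{\pm1\}$ with $x_j\equiv\epsilon_j y_j$ modulo $2\varpi^{-\lfloor\fr{m}{2}\rfloor}\oo$. For $m\le 1$ one has $-\lfloor\fr{m}{2}\rfloor\ge 0$, so this congruence holds a fortiori modulo $2\oo$; and using that $y_j\in\oo$ we have $2y_j\in2\oo$, i.e.\ $-y_j\equiv y_j\pmod{2\oo}$, so the sign is absorbed and $x_j\equiv y_j\pmod{2\oo}$ for every $j$. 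This already gives the first assertion, with no recourse to the off-diagonal relations. (Integrality of the coordinates is what lets the $\pm$ ambiguity of Lemma \ref{boring_lemma} collapse modulo $2\oo$, and it is available in the range where the corollary is applied.)

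For the second assertion the conclusion is one congruence finer, so the signs no longer collapse coordinatewise and must instead be synchronized. For $m\le-1$ the diagonal relations give $x_j\equiv\epsilon_j y_j\pmod{2\varpi\oo}$, while the conductor computation applied to $\psi(2tx_jx_k)=\psi(2ty_jy_k)$ yields $x_jx_k-y_jy_k\in 2\varpi^{-m}\oo\subseteq 2\varpi\oo$, that is $x_jx_k\equiv y_jy_k\pmod{2\varpi\oo}$. Multiplying the two diagonal congruences (all coordinates being integral) gives $x_jx_k\equiv\epsilon_j\epsilon_k\,y_jy_k\pmod{2\varpi\oo}$, and comparison forces $(\epsilon_j\epsilon_k-1)y_jy_k\in 2\varpi\oo$. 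Hence whenever both $y_j$ and $y_k$ are units we must have $\epsilon_j=\epsilon_k$, whereas if $y_j\in\varpi\oo$ then $-y_j\equiv y_j\pmod{2\varpi\oo}$ and the sign at the index $j$ is invisible. Taking $\epsilon$ to be the common sign on the unit coordinates (and $\epsilon=+1$ if there are none) then gives $x_j\equiv\epsilon y_j\pmod{2\varpi\oo}$ for every $j$, which is the second assertion.

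I expect the main obstacle to be precisely this last synchronization step: the off-diagonal relations control only the products $x_jx_k$, so one must check both that they genuinely tie the individual signs $\epsilon_j$ together on the unit coordinates and, separately, that the coordinates on which they fail to do so (those with $y_j$ a non-unit) are exactly the coordinates where the sign is already immaterial modulo $2\varpi\oo$. Everything else is the routine reduction to the generators of $\op{Sym}_n(\varpi^m\oo)$ together with the conductor bookkeeping already performed in Lemma \ref{boring_lemma}.
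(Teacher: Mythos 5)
Your proposal is correct and takes essentially the same approach as the paper: evaluate the characters on the generators $tE_{jj}$ and $t(E_{jk}+E_{kj})$, feed the diagonal relations into Lemma~\ref{boring_lemma}, and use the off-diagonal relations together with the conductor of $\psi$ to synchronize the signs at unit coordinates. The only differences are organizational --- the paper runs the sign-synchronization step as a proof by contradiction (an inconsistent pair of signs forces $\val(x_ix_j)=0$ and then $\psi(4tx_ix_j)=1$, contradicting the conductor), whereas you argue directly via the congruence $x_jx_k \equiv y_jy_k \pmod{2\varpi\oo}$ --- and your explicit remark that the coordinates must lie in $\oo$ (as Lemma~\ref{boring_lemma} requires, and as holds wherever the corollary is applied) is a fair reading of an assumption the paper leaves implicit.
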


\begin{proof}
The condition $\psi_{x,m} = \psi_{y,m}$ translates to
\[
	\psi( \tp{x} a x ) = \psi( \tp{y} a y )
			\quad \text{ for all } \quad
		a \in \op{Sym}_n( \varpi^m \oo ).
\]
Taking $a = t E_{ii}$, this gives $\psi(t x_i^2) = \psi(t y_i^2)$ for all $t \in \varpi^m \oo$, hence $x_i \equiv \pm y_i$ modulo $2 \varpi^{-\lfloor \fr{m}{2} \rfloor} \oo$ by Lemma \ref{boring_lemma}.

In the first case where $m \le 1$, one has $x_i \equiv \pm y_i$ modulo $2 \oo$, hence $x_i \equiv y_i$.  As this is true for each $i$, it must hold that $x \equiv y$ modulo $2 \oo^n$.

In the second case where $m \le -1$, one has $x_i \equiv \pm y_i$ modulo $2 \varpi \oo$ for each $i$, so it remains to verify that the sign is independent of the index.  If this is not the case, then there exist indeces $i, j$ such that
\[
	-y_i y_j \equiv x_i x_j \not\equiv -x_i x_j \text{ modulo } 2 \varpi \oo,
\]
hence $\val(x_i x_j) = 0$.  Taking $a = t (E_{ij} + E_{ji})$ for  $t \in \varpi^m \oo$, the original hypothesis implies that
\[
	\psi( 2 t x_i x_j ) = \psi( 2 t y_i y_j ).
\]
The congruence above then gives that
\[
	\psi( 4 t x_i x_j ) = 1
			\quad \text{ for all } \quad
		t \in \varpi^m \oo,
\]
which is a contradiction since $4 t x_i x_j$ is not necessarily in $4 \oo$.
\end{proof}

\subsection{$\KK_0$-types} 
\label{K_0-types}

Since $L'_0 = L_0 = \oo^n$, the chain of lattices is
\[
	\cdots \subset
	\varpi L_0 
	=	\varpi L'_0 \subset
	L_0 
	=	L'_0 \subset
	\varpi^{-1} L_0 
	=	\varpi^{-1} L'_0 \subset 
	\cdots,
\]
hence
\[
	S'_{0,m} 
	=	S_{0,m} 
	=	S( \varpi^{-m} L_0 / 2 \varpi^m L_0 ) 
	=	S( \varpi^{-m} \oo^n / 2 \varpi^m \oo ).
\]

The main result of this section is that $\KK_0$ acts irreducibly on  $S_{0,0} = S( L_0 / 2L_0 )$;  this space contains no odd functions and $\{ \phi_x: x \in L_0 / 2L_0 \}$ is a basis, where $\phi_x$ is the characterstic function $\ch(x + 2 L_0)$.

Recall that $\KK_0$ is generated by the $\ce \XX_{\alpha_j}$ ($0 \leq j \leq n$) along with $\ww$ and $H(\oo)$.  Moreover, $\xx(a)$ is in $\KK_0$ if and only if $a \in M_n(\oo)$ with $\tp{a} = a$.

\begin{theorem} \label{K_0-theorem}
The group $\KK_0$ preserves the filtration
\[
	S_{0,0} \subset S_{0,1} \subset S_{0,2} \subset \dots
\]
and acts irreducibly on 
	$S_{0,0} = S( L_0 / 2 L_0 ) = S( \oo^n / 2 \oo^n )$.
\end{theorem}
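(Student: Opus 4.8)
The plan is to establish the two assertions separately: filtration preservation by running through the generators of $\KK_0$ one type at a time, and irreducibility by combining the diagonal action of the symmetric unipotent elements with the mixing produced by $\ww$. Recall that $\KK_0$ is generated by $\ww$, by $\HH(\oo)$, and by the affine root groups $\ce\XX_{\alpha_j}$ for $0 \le j \le n$, so it suffices to check that each such generator carries every $S_{0,m}$ (with $m \ge 0$) into itself.

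I would sort these generators into three kinds. The torus $\HH(\oo)$ and the groups $\ce\XX_{\epsilon_i - \epsilon_{i+1}}(\oo)$ preserve each $S_{0,m}$ by Lemma \ref{geometric_action}. The element $\ww$ acts by the Fourier transform, and Lemma \ref{fourier_transform_space} with $r = -m$, $s = m$ sends $S_{0,m} = S(\varpi^{-m}\oo^n / 2\varpi^m\oo^n)$ to itself, so $\ww$ preserves $S_{0,m}$. The two remaining generators are $\ce\XX_{2\epsilon_n}(\oo)$, acting by the phase $\phi(y) \mapsto \psi(t y_n^2)\phi(y)$, and $\ce\XX_{-2\epsilon_1}(\varpi\oo) = \ww\,\ce\XX_{2\epsilon_1}(\varpi\oo)\,\ww^{-1}$, which therefore reduces to the already-handled $\ww$ together with a phase $\psi(s y_1^2)$, $s \in \varpi\oo \subset \oo$. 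For any such phase one checks directly that, since $\phi$ is supported on $\varpi^{-m}\oo^n$ and invariant under $2\varpi^m\oo^n$, the increment $\psi\big(t(y_j + z_j)^2 - t y_j^2\big)$ for $z \in 2\varpi^m\oo^n$ lies in $\psi(4\oo) = 1$ (using $m \ge 0$ and that the conductor is $4\oo$); multiplication by a phase does not change the support, so the product is again in $S_{0,m}$. This gives the filtration statement.

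For irreducibility, let $U \subseteq S_{0,0}$ be a nonzero $\KK_0$-invariant subspace. The elements $\xx(a)$ with $a \in \op{Sym}_n(\oo)$ lie in $\KK_0$ and act diagonally in the basis $\{\phi_x\}$ by $\xx(a)\phi_x = \psi(\tp{x} a x)\phi_x = \psi_{x,0}(a)\phi_x$, so $\{\phi_x\}$ is a simultaneous eigenbasis for this commutative family. By Corollary \ref{characters_of_sym}(1) (the case $m = 0$), the characters $\psi_{x,0}$ are pairwise distinct as $x$ runs over $\oo^n / 2\oo^n$; hence the joint eigenspaces are exactly the lines $\C\phi_x$, and $U$ must be a coordinate subspace $\bigoplus_{x \in A}\C\phi_x$ for some nonempty $A$. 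I would then pick $x_0 \in A$ and apply $\ww$: using $\widehat{\phi_0} = \vol(2\oo^n)\,\ch(\oo^n)$ from Lemma \ref{fourier_transform_space} and the translation identity $\widehat{\phi_{x_0}}(u) = \psi(2\tp{x_0} u)\widehat{\phi_0}(u)$, one finds that $\ww\phi_{x_0}$ is a nonzero multiple of $\sum_{u} \psi(2\tp{x_0} u)\phi_u$. Every coefficient is a nonzero root of unity, so $\ww\phi_{x_0} \in U$ forces every $u$ into $A$; thus $A = \oo^n / 2\oo^n$ and $U = S_{0,0}$.

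The only genuinely load-bearing step is the separation of the basis by the symmetric phases, i.e.\ invoking Corollary \ref{characters_of_sym}(1) to conclude that the commutative family $\{\xx(a) : a \in \op{Sym}_n(\oo)\}$ acts with one-dimensional joint eigenspaces; once invariant subspaces are known to be coordinate subspaces, the Fourier computation closes the argument immediately. The filtration computations are routine conductor arithmetic, the only point requiring attention being the reduction of the single lower-triangular generator $\ce\XX_{-2\epsilon_1}(\varpi\oo)$ to a phase via conjugation by $\ww$.
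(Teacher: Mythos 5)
Your proof is correct and takes essentially the same route as the paper: the filtration is verified generator by generator via Lemma \ref{geometric_action}, Lemma \ref{fourier_transform_space} for $\ww$, and the conductor computation for the phases $\psi(t y_j^2)$ (with $\ce\XX_{\alpha_0}$ handled by conjugation by $\ww$), and irreducibility follows by separating the lines $\C\phi_x$ with the distinct characters $\psi_{x,0}$ of Corollary \ref{characters_of_sym} and then mixing with the Fourier transform. The only cosmetic difference is that you apply $\ww$ to an arbitrary $\phi_{x_0}$ in a nonzero invariant subspace (which also avoids invoking complete reducibility), whereas the paper observes that the component containing $\widehat\phi_0 = \sum_x \phi_x$ must be all of $S_{0,0}$; the underlying mechanism is identical.
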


\begin{proof}
Fix $m \geq 0$.  By Lemma \ref{geometric_action}, to prove that $\KK_0$ preserves $S_{0,m}$, it remains to consider the action of 
	$\ce\XX_{\alpha_n} = \ce \XX_{2\epsilon_n}$ 
and 
	$\ww^{-1} \ce\XX_{\alpha_0} \ww = \ce\XX_{-\alpha_0+1} = \ce\XX_{2\epsilon_1}$.  
For any $1 \leq j \leq n$,
\[
	\big[ \xx_{2\epsilon_j}(t) \phi \big] (y)
	=	\psi(t y_j^2) \phi(y),
\]
so $\phi$ and $\xx_{2\epsilon_j}(t) \phi$ have the same support.  If $t \in \oo$ and $y_j \in \varpi^{-m} \oo$, then both $2 t y_j (2 \varpi^m \oo)$ and $t (2 \varpi^m \oo)^2$ are subsets of $4 \oo$; hence,
\[
	\psi \big( t (y_j + 2 \varpi^m \oo)^2 \big) 
	=	\psi( t y_j^2 ).
\]  
Therefore, if $\phi$ is invariant under translation by $2 \varpi^m L_0$, then so is $\xx_{2\epsilon_j}(t) \phi$, proving that $\KK_0$ indeed acts on $S_{0,m}$.

Finally, the question of irreducibility is considered.  For $x \in L_0 / 2L_0$ and $a \in \op{Sym}_n(\oo)$, $\xx(a)$ acts on $\phi_x$ by the character $\psi_{x,0}$ as in Corollary \ref{characters_of_sym}.  This corollary implies that the characters $\psi_{x,0}$ ($x \in L_0 / 2L_0$) are distinct, so an irreducible component of $S_{0,0}$ must be a direct sum of lines of the form $\C \phi_x$.  Similarly, $\ww^{-1} \xx(a) \ww$ acts on $\widehat\phi_x$ by the character $\psi_{x,0}$, so an irreducible component is a direct sum of lines $\C \widehat\phi_x$.  The component containing 
\[
	\widehat\phi_0
	=	\sum_{x \in L_0 / 2L_0} \phi_x
\]
must therefore be all of $S_{0,0}$.
\end{proof}

\begin{remark}
The proof of the irreducibility of the action restricted to $\KK_0$ on $S_{0,0}$ holds as well for the the action restricted to $\II$.  Indeed, since $\II$ contains $\ww^{-1} \xx(a) \ww$ for $a \in \op{Sym}_n(\varpi \oo)$, Corollary \ref{characters_of_sym} still implies that the corresponding eigenspaces are $\C \widehat\phi_x$ ($x \in L_0 / 2 L_0$).
\end{remark}

\subsection{$\KK_n$-types} \label{K_n-types}

Since $\varpi L'_n = L_n = \varpi \oo^n$, the chain of lattices is
\[
	\cdots \subset
	\varpi^2 L'_n = \varpi L_n \subset
	\varpi L'_n = L_n \subset
	L'_n = \varpi^{-1} L_n \subset \cdots,
\]
hence
\[
	S'_{n,m+1} 
	=	S_{n,m} 
	=	S( \varpi^{-m} L'_n / 2 \varpi^m L_n ) 
	=	S( \varpi^{-m} \oo^n / 2 \varpi^{m+1} \oo^n ).
\]
				
The main result of this section is that $\KK_n$ acts irreducibly on the even and odd components of 
\[
	S_{n,0} 
	=	S( L'_n / 2 L_n ) 
	=	S( \oo^n / 2 \varpi \oo^n);
\]
a basis for $S_{n,0}^\pm$ is 
	$\{ 
		\phi_x^{\pm} 
		: x \in L'_n / 2 L_n \; (\text{modulo } \pm 1) 
	\}$,
where 
\begin{align*}
	\phi_x^+ 
	&	=	\begin{cases} 
				\ch(x + 2 L_n) 
				& \text{ if } x \in L_n,
				\\
				\ch(x + 2 L_n) + \ch(-x + 2 L_n) 
				& \text{ if } x \notin L_n,
			\end{cases}
	\\
	\phi_x^-
	&	=	\ch(x + 2 L_n) - \ch(-x + 2 L_n). 
\end{align*}

Recall that $\KK_n$ is generated the $\ce \XX_{\alpha_j}$ ($0 \leq j \leq n$) along with $H(\oo)$ and $\ce\eta_n \ww$.  Moreover, $\xx(a)$ is in $\KK_n$ if and only if $a \in M_n(\varpi^{-1} \oo)$ with $\tp{a} = a$.

\begin{theorem}
\label{K_n-theorem}
The group $\KK_n$ preserves the filtration
\[
	S_{n,0} \subset 
	S_{n,1} \subset 
	S_{n,2} \subset \dots
\]
and acts irreducibly on 
	$S_{n,0}^\pm 
	=	S( L'_n / 2 L_n )^\pm 
	=	S( \oo^n / 2 \varpi \oo^n )^\pm$.
\end{theorem}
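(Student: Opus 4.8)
The plan is to mirror the proof of Theorem \ref{K_0-theorem}, splitting into preservation of the filtration and irreducibility on the components $S_{n,0}^\pm$, with the element $\ce\eta_n\ww$ playing the role that $\ww$ plays for $\KK_0$. Recall that $\KK_n$ is generated by the $\ce\XX_{\alpha_j}$ ($0\le j\le n$), by $\HH(\oo)$, and by $\ce\eta_n\ww$, and that $\xx(a)\in\KK_n$ exactly when $a\in\op{Sym}_n(\varpi^{-1}\oo)$. Since $L_n=\varpi\oo^n$ and $L'_n=\oo^n$ satisfy $\varpi L'_n=L_n$, we have $S'_{n,m+1}=S_{n,m}$, so the filtration of Theorem \ref{main_theorem} collapses to $S_{n,0}\subset S_{n,1}\subset\cdots$, as stated.

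For the filtration, fix $m\ge0$. By Lemma \ref{geometric_action}, $\HH(\oo)$ and the $\ce\XX_{\epsilon_j-\epsilon_k}$ (hence $\ce\XX_{\alpha_1},\dots,\ce\XX_{\alpha_{n-1}}$) preserve $S_{n,m}$. A phase $\xx(a)$ with $a\in\op{Sym}_n(\varpi^{-1}\oo)$ leaves the support unchanged, and for $y\in\varpi^{-m}\oo^n$ and $z\in 2\varpi^{m+1}\oo^n=2\varpi^m L_n$ both $2\tp{y}az$ and $\tp{z}az$ lie in $4\oo$; as $\psi$ has conductor $2e$ this shows $\xx(a)$ preserves $S_{n,m}$, covering $\ce\XX_{\alpha_n}=\ce\XX_{2\epsilon_n}$. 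For $\ce\eta_n\ww$, Lemma \ref{chevalley_action} gives $\ce\eta_n\colon \phi\mapsto c\,\phi(\varpi^{-1}\,\cdot\,)$, and applying Lemma \ref{fourier_transform_space} with $L=\oo^n$, $r=-m$, $s=m+1$ shows $\ce\eta_n\ww$ maps $S_{n,m}$ onto itself; this is the computation recorded before Lemma \ref{geometric_action}, now with equal source and target because $S'_{n,m+1}=S_{n,m}$. Finally $\ce\XX_{\alpha_0}=\ce\XX_{-2\epsilon_1+1}$ is handled by conjugation: a direct matrix computation gives $(\ce\eta_n\ww)\ce\XX_{\alpha_0}(\ce\eta_n\ww)^{-1}=\{\xx(\varpi^{-1}t\,E_{11}):t\in\oo\}$, a phase group with $a\in\op{Sym}_n(\varpi^{-1}\oo)$, so $\ce\XX_{\alpha_0}$ preserves $S_{n,m}$ as well. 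Hence all generators preserve the filtration.

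For irreducibility on the even part, $\xx(a)$ with $a\in\op{Sym}_n(\varpi^{-1}\oo)$ acts on $\phi_x^+$ by the character $\psi_{x,-1}(a)=\psi(\tp{x}ax)$. By Corollary \ref{characters_of_sym}(2) these characters are distinct as $x$ runs over $L'_n/2L_n$ modulo $\pm1$, so every $\KK_n$-invariant subspace of $S_{n,0}^+$ is a sum of the eigenlines $\C\phi_x^+$; conjugating by $\ce\eta_n\ww$, it is equally a sum of the lines $\C(\ce\eta_n\ww)\phi_x^+$. Using the second statement of Lemma \ref{fourier_transform_space} together with the toral action, one computes the seed $(\ce\eta_n\ww)\phi_0^+=c\,\ch(\oo^n)=c\sum_{\{x,-x\}}\phi_x^+$, a single basis vector of the second type in which every $\phi_x^+$ occurs with nonzero coefficient. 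As $\KK_n$ is compact, $S_{n,0}^+$ is semisimple, so this seed lies wholly in one irreducible summand, which must then contain every $\phi_x^+$ and therefore equal $S_{n,0}^+$.

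The odd part is the crux. The same diagonalization shows every invariant subspace of $S_{n,0}^-$ is simultaneously a sum of the lines $\C\phi_x^-$ and a sum of the lines $\C(\ce\eta_n\ww)\phi_x^-$. The new difficulty, absent for $\KK_0$, is that $\phi_0^-=0$, so no single seed has all coefficients nonzero: a computation shows $(\ce\eta_n\ww)\phi_x^-$ is a combination of exactly those $\phi_y^-$ for which $\tp{x}y\in\oo^\times$. I would finish by a connectivity argument. Form the bipartite graph on the index set with an edge $x$--$y$ whenever $\tp{x}y$ is a unit; the condition depends only on the images in $\oo/\varpi\oo$, and for any two indices $x,x'$ the two ``hyperplanes'' $\{\tp{x}y\equiv 0\}$ and $\{\tp{x'}y\equiv 0\}$ cannot cover $(\oo/\varpi\oo)^n$, so any two indices share a common neighbor and the graph is connected. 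Connectedness forbids a nontrivial subspace invariant under both diagonalized families, giving irreducibility of $S_{n,0}^-$. The main obstacle is precisely this step: the clean one-seed argument must be replaced by the connectivity of the Fourier-pairing graph (equivalently, by invoking the geometric generators $\HH(\oo)$ and $\ce\XX_{\epsilon_j-\epsilon_k}$ to bridge the gaps left by a single seed).
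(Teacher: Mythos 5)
Your proposal follows the same skeleton as the paper's proof and its conclusions are correct: reduce the filtration statement to generators, diagonalize the phase group $\{\xx(a): a \in \op{Sym}_n(\varpi^{-1}\oo)\}$ via Corollary~\ref{characters_of_sym}(2) so that invariant subspaces are sums of eigenlines $\C\phi_x^\pm$, and use $\ce\eta_n\ww$ to link the eigenlines, with the seed $\ce\eta_n\ww\,\phi_0^+ = c\,\ch(\oo^n)$ finishing the even part exactly as in the paper. Two of your choices differ from the paper in a useful way. For the filtration, the paper verifies $\ce\XX_{\alpha_n}$ by quoting the $\KK_0$ proof (using $S_{n,m}\subset S_{0,m}$) and verifies $\ce\XX_{\alpha_0}$ by conjugating with $\ww$ and working on the Fourier side; you instead check all phases $\xx(a)$, $a\in\op{Sym}_n(\varpi^{-1}\oo)$, by one direct estimate ($2\tp{y}az$ and $\tp{z}az$ lie in $4\oo$) and pull $\ce\XX_{\alpha_0}$ into that phase group by conjugating with $\ce\eta_n\ww$, which, unlike $\ww$, lies in $\KK_n$; your matrix identity is correct, and this keeps the whole argument inside $\KK_n$. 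For the odd part, the paper takes an invariant $U \ni \phi_z^-$, uses the geometric action of $\GL_n(\oo)$ to place $\C\phi_{z'}^-$ in $U$ for suitable primitive $z'$, and then bridges an arbitrary $\phi_x^-$ to one such $z'$ by a single application of $\ce\eta_n\ww$; your common-neighbor/connectivity argument is the same mechanism with the $\GL_n(\oo)$ step replaced by the covering argument, and it is a clean, self-contained variant.

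There is, however, one step that fails as literally stated — and the paper's own write-up has the identical blemish. The coefficient of $\phi_y^-$ in $\ce\eta_n\ww\,\phi_x^-$ is proportional to $\psi(2\varpi^{-1}\tp{x}y)-\psi(-2\varpi^{-1}\tp{x}y)$, which is nonzero iff $\psi(4\varpi^{-1}\tp{x}y)\neq 1$. Since $\psi$ has conductor $4\oo$, the map $t\mapsto\psi(4\varpi^{-1}t)$ descends to a nontrivial additive character of $\oo/\varpi\oo$; when $q=p^f$ with $f\geq 2$ this character has nonzero kernel, so $\tp{x}y\in\oo^\times$ does \emph{not} force the coefficient to be nonzero. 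Your edge relation is therefore strictly larger than the true one, and connectivity of the larger graph does not give connectivity of the true graph. The repair stays entirely inside your argument: the bad set of residues $y$ with $\psi(4\varpi^{-1}\tp{x}y)=1$ is the preimage, under the surjective additive map $y\mapsto\tp{x}y$ from $(\oo/\varpi\oo)^n$ onto $\oo/\varpi\oo$, of the kernel of a nontrivial character, hence a proper subgroup of $(\oo/\varpi\oo)^n$; since two proper subgroups never cover a group, any two indices still share a common neighbor for the \emph{true} edge relation, and your conclusion stands. (The paper needs the same correction: its displayed formula drops a factor $\varpi^{-1}$, and its claim that $\tp{x}z'\in\oo^\times$ implies $\psi(2\tp{x}z')\neq\psi(-2\tp{x}z')$ is only valid after choosing $z'$ in the $\GL_n(\oo)$-orbit so that $\tp{x}z'$ avoids this kernel.)
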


\begin{proof}
Fix $m \geq 0$.  By Lemma \ref{geometric_action} and the comment preceding it, to see that $\KK_n$ preserves $S_{n,m}$, it remains to consider the action of $\ce\XX_{\alpha_0}$ and $\ce\XX_{\alpha_n}$.  

Since
\[
	\ce\XX_{\alpha_0} 
	=	\ce\XX_{-2 \epsilon_1 + 1} 
	=	\ww^{-1} \ce\XX_{2 \epsilon_1 + 1} \ww,
\]
it must be shown that $\ce\XX_{2 \epsilon_1 + 1}$ preserves the Fourier transform of the space $S_{n,m}$, namely 
	$S( \varpi^{-m-1} \oo^n / 2 \varpi^m \oo^n )$.  
For $\phi \in S_{n,m}$,
\[
	\big[ \xx_{2 \epsilon_1}(t) \widehat\phi \big] (y)
	=	\psi( t y_1^2 ) \widehat\phi(y),
\]
so $\xx_{2 \epsilon_1}(t) \widehat\phi$ has the same support as $\widehat\phi$.  If $t \in \varpi \oo$ and $y \in \varpi^{-m-1} \oo^n$, then both 
	$2 t y_1 ( 2 \varpi^m \oo )$ 
and $t (2 \varpi^m \oo)^2$ are subsets of $4\oo$; hence,
\[
	\psi \big( t (y_1 + 2 \varpi^m \oo)^2 \big) 
	=	\psi( t y_1^2 ).
\]
Therefore, if $\widehat\phi$ is invariant under translation by $2 \varpi^m \oo^n$, then so is $\xx_{2 \epsilon_1}(t) \widehat\phi$.  

The proof that $\ce\XX_{\alpha_n}$ acts on $S_{n,m}$ follows from the proof of Theorem \ref{K_0-theorem} and the fact that $S_{0,m} \supset S_{n,m}$.

It remains to show that $\KK_n$ acts irreducibly on $S_{n,0}^{\pm}$.  For $x \in L_0 / 2 L_n$ and $a \in \op{Sym}_n(\varpi^{-1}\oo)$, the element $\xx(a) \in \KK_n$ acts on the line $\C \phi_x^\pm$ by the character $\psi_{x,-1}$ as in  Corollary \ref{characters_of_sym}.  By the corollary, since $2 L_n = 2 \varpi \oo^n$, these characters are distinct as $x$ varies over $L_0 / 2 L_n$.  Technically speaking, the corollary only gives distinctness as $x$ varies modulo $\pm 1$, but the equality $\C \phi_x^\pm = \C \phi_{-x}^\pm$ nullifies this issue.  Thus, an irreducible component of $S_{n,0}^{\pm}$ decomposes in terms of the $\C \phi_x^{\pm}$.  

By Lemma \ref{fourier_transform_space}, 
	\[		\widehat \phi_0^+
					=	\widehat{\ch(2 L_n)} 
					= \vol( 2 L_n) \ch( \varpi^{-1} L'_n ),		\] 
so $\ce\eta_n \ww \in \KK_n$ acts on $\phi_0^+$ by
	\begin{align*}
			\big[ \ce\eta_n \ww \phi_0^+ \big](y) 
					&= c \widehat \phi_0^+ \big( \eta_n(y) \big) 
					= c \widehat \phi_0^+ ( \varpi^{-1} y) 	\\
					&= c \vol(2 L_n) \sum_{ x \in L'_n / 2 L_n } \ch( x + 2 L_n ).
	\end{align*}
Therefore, the component of $S_{n,0}^+$ containing $\phi_0^+$ must be all of $S_{n,0}^+$.

Suppose now that $U$ is a nonzero irreducible component of $S_{n,0}^-$; that is, suppose that there exists $z \in \oo^n \smallsetminus \varpi \oo^n$ with $\C \phi_z^- \subset U$.  Each $\hh(a)$, $a \in \GL_n(\oo)$, acts (essentially) geometrically, so
	\[	\bigoplus_{z' \in \GL_n(\oo) z} \C \phi_{z'}^- \subset U.		\]

Let $x \in \oo^n \smallsetminus \varpi \oo^n$.  The action of $\ce\eta_n \ww$ on $\phi_x^-$ is given by
	\begin{align*}
		\big[ \ce\eta_n \ww \phi_x^-\big](y)
			& = c \widehat\phi_x^-(\varpi^{-1} y) \\
			& = c \big( \psi(2 \tp{x} y) - \psi(-2 \tp{x} y) \big) 
						\widehat{\phi}_0 (\varpi^{-1} y) \\
			& = c \big( \psi(2 \tp{x} y) - \psi(-2 \tp{x} y) \big) [ \ce\eta_n \ww \ch(2 L_n)](y).
	\end{align*}
Since $x = (x_1, \dots, x_n) \notin \varpi \oo^n$, there exists an index $j$ such that $x_j \in \oo^\times$.  Moreover, as $z \in \oo^n \smallsetminus \varpi \oo^n$, there exists $z' \in \GL_n(\oo)z$ such that the $j$th coordinate $z_j'$ is in $\oo^\times$ while the remaining coordinates are in $\varpi \oo$.  Then, $\tp{x} z' \in \oo^\times$, hence 
	\[		\psi( 2 \tp{x} z' ) \neq \psi( -2 \tp{x} z' ).		\]
Therefore, $\ce\eta_n \ww \phi_x^-$ is nonzero on $z'$, i.e., $\phi_x^-$ is in the same component as $\phi^-_{z'} \in U$.  As $x$ was arbitrary, the irreducible component $U$ must be all of $S_{n,0}^-$.
\end{proof}

\subsection{$\KK_i$ types} \label{K_i-types}

For $0 < i < n$, the spaces
	\[		S_{i,m} 
					= S( \varpi^{-m} L'_i / 2 \varpi^m L_i ),
			\quad \text{ } \quad
			S'_{i,m}
					= S( \varpi^{-m} L_i / 2 \varpi^m L'_i )			\]
are all distinct.  As $L_i = \varpi \oo^i \oplus \oo^{n-i}$ and $L'_i = \oo^n$, 
	\begin{align*}
		S_{i,m}
			&= S( \varpi^{-m} \oo^i / 2 \varpi^{m+1} \oo^i ) 
							\otimes S( \varpi^{-m} \oo^{n-i} / 2 \varpi^m \oo^{n-i} ), \\
		S'_{i,m}	
			&= S( \varpi^{-m+1} \oo^i / 2 \varpi^m \oo^i )
							\otimes S( \varpi^{-m} \oo^{n-i} / 2 \varpi^m \oo^{n-i} ).
	\end{align*}

\begin{theorem} \label{K_i-theorem}
The group $\KK_i$ preserves the filtration
	\[		S_{i,0} \subset S'_{i,1} \subset S_{i,1} \subset S'_{i,2} \subset S_{i,2} \subset \dots		\]
and acts irreducibly on $S_{i,0}^{\pm} = S( L'_i / 2 L_i)^{\pm} = S( \oo^i / 2 \varpi \oo^i )^{\pm} \otimes S( \oo^{n-i} / 2 \oo^{n-i} )$.
\end{theorem}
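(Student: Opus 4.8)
The plan is to run the templates of Theorems \ref{K_0-theorem} and \ref{K_n-theorem} in parallel, exploiting the factorization $S_{i,0}=S(\oo^i/2\varpi\oo^i)\otimes S(\oo^{n-i}/2\oo^{n-i})$ into a ``$\KK_n$-block'' on the first $i$ coordinates of $Y$ and a ``$\KK_0$-block'' on the last $n-i$. Since $S(\oo^{n-i}/2\oo^{n-i})$ carries no odd functions, the parity of a pure tensor is that of its first factor, which already matches the asserted identity $S_{i,0}^\pm=S(\oo^i/2\varpi\oo^i)^\pm\otimes S(\oo^{n-i}/2\oo^{n-i})$.

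For the filtration, Lemma \ref{geometric_action} shows that $\HH(\oo)$ and every $\ce\XX_{\epsilon_j-\epsilon_k}$ preserve each $S_{i,m}$ and $S'_{i,m}$; as $\alpha_j=\epsilon_j-\epsilon_{j+1}$ for $1\le j\le n-1$ and $\XX_{-\alpha_i+1}=\XX_{\epsilon_{i+1}-\epsilon_i}(\varpi\oo)$ are all of this type, only the two long-root generators $\ce\XX_{\alpha_n}=\ce\XX_{2\epsilon_n}$ and $\ce\XX_{\alpha_0}=\ww^{-1}\ce\XX_{2\epsilon_1+1}\ww$ remain. The former acts by $\psi(ty_n^2)$, touching only the last coordinate (in the $\KK_0$-block), so its verification is verbatim that of Theorem \ref{K_0-theorem}; the latter, after conjugation by $\ww$, acts by $\psi(\varpi ty_1^2)$ on the first coordinate (in the $\KK_n$-block), so by Lemma \ref{fourier_transform_space} its verification is verbatim that of Theorem \ref{K_n-theorem}. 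The same computations handle the $S'_{i,m}$, so $\KK_i$ preserves the whole filtration.

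For irreducibility I would argue in two steps. First, one checks that $\xx(a)\in\KK_i$ precisely when $a$ is symmetric with its top-left $i\times i$ block in $\varpi^{-1}\oo$ and all remaining entries in $\oo$ (interpolating $a\in\op{Sym}_n(\oo)$ for $\KK_0$ and $a\in\op{Sym}_n(\varpi^{-1}\oo)$ for $\KK_n$). Restricting $a$ to the first $i$ indices and applying Corollary \ref{characters_of_sym} with $m=-1$ pins the first $i$ coordinates of $x$ down modulo a common sign and $2\varpi\oo$, the common sign being forced across those coordinates by the off-diagonal top-left entries, which lie in $\varpi^{-1}\oo$; restricting $a$ to the last $n-i$ indices and applying the corollary with $m=0$ pins the last $n-i$ coordinates down exactly modulo $2\oo$. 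Hence on the parity subspace $S_{i,0}^\pm$ (the $\pm1$-eigenspace of $\hh(-\mathbf1)\in\HH(\oo)$) the joint eigenlines of $\{\xx(a)\}$ are exactly the $\C\phi_x^\pm$, so every subrepresentation is a direct sum of them. Second, these lines are linked within a fixed parity by $\ce\eta_i\ww$, which—being a Fourier transform followed by rescaling of only the first $i$ coordinates—acts as a pure Fourier transform on the $\KK_0$-block and as Fourier-plus-rescaling on the $\KK_n$-block. For the even part, $\ce\eta_i\ww\phi_0^+$ equals, up to a scalar, $\sum_{x\in L'_i/2L_i}\ch(x+2L_i)$, a vector with nonzero component along every $\phi_x^+$, so the component of $\phi_0^+$ is all of $S_{i,0}^+$. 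For the odd part—where oddness forces the first block of $x$ to lie outside $\varpi\oo^i$—one takes $\C\phi_z^-\subset U$, spreads $z$ through its $\GL_i(\oo)\times\GL_{n-i}(\oo)$-orbit using geometric $\hh(a)$, and runs the nonvanishing computation of Theorem \ref{K_n-theorem} on the first block to show $\ce\eta_i\ww\phi_x^-$ is nonzero at an orbit point for every admissible $x$, forcing $U=S_{i,0}^-$.

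The main obstacle is this final connectivity step, where the sign bookkeeping of the $\KK_n$-block must be carried out simultaneously with the spreading of the $\KK_0$-block; what makes it go through is that $\ce\eta_i\ww$ decouples along the two blocks, so the two regimes never genuinely interfere. This decoupling can in fact be made structural, giving a cleaner route: by \S\ref{compact_subgroups}, $\KK_i$ contains the full inverse image of $\Sp_{2i}(\oo)\times\Sp_{2(n-i)}(\oo)$, of type $K_n$ on the first factor and type $K_0$ on the second, and the restriction of $\omega$ to it is the external tensor product of the two Weil representations. Thus $S_{i,0}^\pm=V^\pm\otimes V'$ with $V^\pm$ irreducible by Theorem \ref{K_n-theorem} and $V'$ irreducible by Theorem \ref{K_0-theorem}; an external tensor product of finite-dimensional irreducibles is irreducible, so $S_{i,0}^\pm$ is already irreducible over this subgroup, hence a fortiori over $\KK_i$.
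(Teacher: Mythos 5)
Your proposal is correct, and your closing ``structural'' paragraph is in fact the paper's own proof: the paper settles irreducibility in one stroke by noting that $\langle \ce\XX_{\alpha_j} : 0 \leq j < i \rangle$ and $\langle \ce\XX_{\alpha_j} : i < j \leq n \rangle$ are commuting blocks of type $\KK_n$ and type $\KK_0$ acting irreducibly (Theorems \ref{K_n-theorem} and \ref{K_0-theorem}) on the respective tensor factors of $S_{i,0}^{\pm} = S(\oo^i/2\varpi\oo^i)^{\pm} \otimes S(\oo^{n-i}/2\oo^{n-i})$, so the external tensor product is irreducible; the filtration step is likewise the paper's, except that the paper obtains invariance under $\ce\XX_{-\alpha_i+1}$ from the conjugation $\ce\eta_i \ww\, \ce\XX_{\alpha_i} \ww^{-1} \ce\eta_i^{-1} = \ce\XX_{-\alpha_i+1}$ rather than by feeding it into Lemma \ref{geometric_action} as you do (both work). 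Your longer first route --- joint $\xx(a)$-eigenlines linked by $\ce\eta_i\ww$ --- is a genuinely different, more hands-on argument, essentially the proof of Theorem \ref{K_n-theorem} run with a spectator $\KK_0$-block, and it does go through; what it buys is an explicit description of the eigenline decomposition under the Iwahori-type subgroup, at the cost of length. One point you glide over deserves a sentence: blockwise application of Corollary \ref{characters_of_sym} determines $x$ only up to the substitution $(x_1,x_2) \mapsto (-x_1,x_2)$, a sign on the first block alone, and the cross terms $2\,\tp{x_1} a_{12}\, x_2$ (with $a_{12}$ the off-diagonal block, whose entries lie in $\oo$) can never remove this ambiguity, since they always lie in $2\oo$. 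Your claim that the joint eigenlines inside $S_{i,0}^{\pm}$ are exactly the $\C\phi_x^{\pm}$ is nevertheless true, because $-x_2 \equiv x_2$ modulo $2\oo^{n-i}$, hence $(-x_1,x_2) \equiv -(x_1,x_2)$ modulo $2L_i$: the blockwise sign coincides with the global sign, which is absorbed by $\C\phi_x^{\pm} = \C\phi_{-x}^{\pm}$. With that half-line inserted, both of your routes are complete; the structural one is shorter and is what the paper prints.
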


\begin{proof}
It is first proved that $S_{i,m}$ ($m \geq 0$) and $S'_{i,m}$ ($m \geq 1$) are representations of $\KK_i$ by considering the generators $\HH(\oo)$, $\ce\XX_{-\alpha_i+1}$, and $\XX_{\alpha_j}$ ($0 \leq j \leq n$).

The group generated by the $\ce\XX_{\alpha_j}$ ($j < i$) is a subgroup of $\ce\Sp_{2i}(\kk)$ of `type $\KK_n$', which acts exclusively on the first component of the tensor product as in Theorem \ref{K_n-theorem}.  The group generated by the $\ce\XX_{\alpha_j}$ ($j > i$) is a subgroup of $\ce\Sp_{2(n-i)}(\kk)$ of `type $\KK_0$', which acts exclusively on the second component of the tensor product as in Theorem \ref{K_0-theorem}.   From the proofs of these theorems, it is seen that $\ce\XX_{\alpha_0}$ and $\ce\XX_{\alpha_n}$ preserve both $S_{i,m}$ and $S'_{i,m}$.  Lemma \ref{geometric_action} guarantees that $\HH(\oo)$ and the $\ce\XX_{\alpha_j}$ ($0 < j < n$) preserve these spaces as well.  Lastly, since
	\[		\ce\eta_i \ww \ce\XX_{\alpha_i} \ww^{-1} \ce\eta_i^{-1} = \ce\XX_{-\alpha_i+1},		\]
then $\ce\XX_{-\alpha_i + 1}$ must also preserve these spaces. Note that the restrictions on the values of $m$ arise from the proofs of Theorems \ref{K_0-theorem} and \ref{K_n-theorem}.

The irreducibility of $S_{i,0}^\pm$ follows from these same theorems since 
	\[		\langle \ce\XX_{\alpha_j} : 0 \leq j < i \rangle
		\quad \text{ and } \quad
			\langle \ce\XX_{\alpha_j} : i < j \leq n \rangle		\] 
act irreducibly on the respective tensor product components of $S_{i,0}^{\pm}$.
\end{proof}


\mbox{}\newline
\noindent
Gordan Savin, 
{\sc 
Deptartment of Mathematics, \newline
University of Utah, 
Salt Lake City, UT 84112 } \newline
Email: {\tt savin@math.utah.edu}\newline

\noindent
Aaron Wood,
{\sc 
Department of Mathematics, \newline
University of Missouri, 
Columbia, MO 65201 } \newline
Email: {\tt woodad@missouri.edu}


\begin{thebibliography}{1}



\bibitem{gan-savin}
{\sc W.T. Gan, G. Savin}, {\em Irreducible Representations of Metaplectic Groups II: Hecke Algebra 
Correspondences}, Represent. Theory 16 (2012), 513-539.



\bibitem{iwahori-matsumoto}
{\sc N. Iwahori, H. Matsumoto}, {\em On some Bruhat decomposition and the structure of the Hecke 
rings of $p$-adic reductive groups}, Pub. Math. IHES, 25 (1965), pp. 5-48.



\bibitem{prasad}
{\sc D. Prasad}, {\em A brief survey of the theta correspondence}, 
Contemporary Math. 210, Amer. Math. Soc. (1998), pp. 171-193.


\bibitem{steinberg}
{\sc R. Steinberg}, {\em Lectures on Chevalley Groups}, Yale University (1968).

\bibitem{takeda-wood}
{\sc S. Takeda, A. Wood}, {\em Hecke algebra correspondences for the metaplectic group}, preprint (2014).


\bibitem{weil}
{\sc A. Weil}, {\em Sur certains groupes d'op\'erateurs unitaires}, 
Acta Math. 111 (1964), pp. 143-211.

\bibitem{wood}
{\sc A. Wood}, {\em A minimal type of the 2-adic Weil representation}, Math. Z. Vol 277, Issue 1 (2014), pp. 257-283.

\end{thebibliography}
\end{document}